\theoremstyle{plain}
\newtheorem{theorem}{Theorem}
\newtheorem{proposition}[theorem]{Proposition}
\newtheorem{lemma}[theorem]{Lemma}
\newtheorem{corollary}[theorem]{Corollary}
\theoremstyle{definition}
\newtheorem{definition}[theorem]{Definition}
\newtheorem{remark}[theorem]{Remark}
\newtheorem{notation}[theorem]{Notation}
\newtheorem{claim}[theorem]{Claim}
\author{Narda Cordero-Michel
  \and Hortensia Galeana-S\'anchez
  }
\title{Pancyclism in the Generalized Sum of Digraphs\thanks{This research was supported by grants CONACYT FORDECYT-PRONACES CF-2019/39570 and UNAM-DGAPA-PAPIIT IN102320.}}
\affiliation{
   Instituto de Matem\'{a}ticas, Universidad Nacional Aut\'{o}noma de M\'{e}xico, M\'{e}xico }
\keywords{digraph, cycle, pancyclic digraph, generalizations of tournaments}
\begin{document}
\maketitle
\begin{abstract}
  A digraph $D=(V,A)$ of order $n\geq 3$ is \emph{pancyclic}, whenever $D$ contains a directed cycle of length $k$ for each $k\in \{3$, \ldots, $n\}$; and $D$ is \emph{vertex-pancyclic} iff, for each vertex $v\in V$ and each  $k\in \{3$, \ldots, $n\}$, $D$ contains a directed cycle of length $k$ passing through $v$. 

Let $D_1$, $D_2$,  \ldots, $D_k$ be a collection of pairwise vertex disjoint digraphs. The \emph{generalized sum} (g.s.) of $D_1$, $D_2$, \ldots, $D_k$, denoted by \begin{math}\oplus_{i=1}^k D_i\end{math} or \begin{math} D_1\oplus D_2 \oplus \cdots \oplus D_k\end{math}, is the set of all digraphs $D$ satisfying:
(i) \begin{math}V(D)=\bigcup_{i=1}^k V(D_i)\end{math},
(ii) \begin{math}D\langle V(D_i) \rangle \cong D_i\end{math} for $i=1$, 2, \ldots, $k$; and
(iii) for each pair of vertices belonging to different summands of $D$, there is exactly one arc between them, with an arbitrary but fixed direction. A digraph $D$ in \begin{math}\oplus_{i=1}^k D_i\end{math} will be called a \emph{generalized sum} (g.s.) of $D_1$, $D_2$,  \ldots, $D_k$.

In this paper we prove that if $D_1$ and $D_2$ are two vertex disjoint Hamiltonian digraphs and $D\in D_1 \oplus D_2$ is strong, then at least one of the following assertions holds: $D$ is vertex-pancyclic, it is pancyclic or it is Hamiltonian and contains a directed cycle of length $l$ for each \begin{math}l\in\{3, \ldots, \max\{|V(D_i)|+1 \colon i\in\{1,2\}\}\}\end{math}. 
Moreover, we prove that if $D_1$, $D_2$,  \ldots, $D_k$ is a collection of pairwise vertex disjoint Hamiltonian digraphs, $n_i=|V(D_i)|$ for each $i\in\{1$, \ldots, $k\}$ and $D \in \oplus_{i=1}^k D_i$ is strong, then at least one of the following assertions holds: $D$ is vertex-pancyclic, it is pancyclic or it is Hamiltonian and contains a directed cycle of length $l$ for each \begin{math}l\in \{3, \ldots, \max\{\left(\sum_{i\in S} n_i\right) + 1 \colon S\subset\{1, \ldots, k\} \text{ with } |S|=k-1\}\}\end{math}.
\end{abstract}

\section{Introduction}
\label{sec:introduction}
Let \begin{math}D=(V(D),A(D))\end{math} be a digraph. Along this paper every directed walk, directed path or directed cycle will simply be called a walk, path or cycle, respectively.
Several authors have studied pancyclic and vertex-pancyclic digraphs and they provided some conditions to determine when a digraph is pancyclic or vertex-pancyclic, as \cite{Bang-Jensen1999313, Bang-Jensen2009, Bang-Jensen1995141, Gutin1995153, Moon1966297, Randerath2002219}; and \cite{Thomassen197785}. In fact, given the difficulty of these matters, some authors have studied partial problems, for instance the $k$-pancyclic digraphs (a digraph $D$ is \emph{$k$-pancyclic} if it contains a cycle of length $l$, for each \begin{math}l\in\{k, k+1, \ldots, |V(D)|\}\end{math}, where \begin{math}3\leq k\leq |V(D)|\end{math}), as \cite{Bang-Jensen1997101, Peters2004227}; and  \cite{Tewes2001193}.

\begin{sloppypar}
Well known results on pancyclism involve large degrees of the vertices or large number of arcs. 
For example,  \cite{Randerath2002219} proved that every digraph $D$ on $n\geq 3$ vertices for which \begin{math}\min\{\delta^+(D), \delta^-(D)\} \geq \frac{n+1}{2}\end{math} is vertex-pancyclic. 
\cite{Haggkvist197620} proved that every Hamiltonian digraph on $n$ vertices and \begin{math}\frac{1}{2}n(n+1)-1\end{math} or more arcs is pancyclic, and  that a strongly connected digraph on $n$ vertices and minimum degree grater than or equal to $n$ is pancyclic unless it is one of the digraphs $K_{p,p}$ ($K_{p,p}$ is a digraph obtained from a complete bipartite graph with $p$ vertices in each partite set, by  replacing each edge with a pair of symmetric arcs); and 
 \cite{Thomassen197785} proved that if $D$ is a strong digraph on $n$ vertices, such that \begin{math}d(x) + d(y) \geq 2n\end{math} is satisfied for each pair of non-adjacent vertices $x$ and $y$, then either $D$ has directed cycles of all lengths 2, 3, \ldots, $n$, or $D$ is a tournament (in which case it has cycles of all lengths 3, 4, \ldots, $n$), or $n$ is even and D is isomorphic to a complete bipartite digraph whose partition sets have $n/2$ vertices.   
Continuing in this direction \cite{Bang-Jensen1999313} proved that any digraph $D$ with no symmetric arcs, $n\geq 9$, minimum degree $n-2$ and such that for each pair of non-adjacent vertices $x$ and $y$ the inequality \begin{math}d^+_D(x)+d^-_D(y)\geq n-3\end{math} holds, is vertex-pancyclic. 
\end{sloppypar}

Since it is very difficult to give results on pancyclism for general digraphs, authors have studied the problems of pancyclism and vertex pancyclism in particular families of digraphs, such as tournaments and generalizations of tournaments.
A digraph $D$ is said to be a \emph{tournament} (respectively, a \emph{semicomplete digraph}) whenever for each pair of different vertices, there is exactly one arc (resp. at least one arc) between them. A \emph{$k$-hypertournament} $H$ on $n$ vertices, where \begin{math}2\leq k\leq n\end{math}, is a pair \begin{math}H=(V_H$, $A_H)\end{math}, where $V_H$ is the vertex set of $H$ and $A_H$ is a set of $k$-tuples of vertices such that, for all subsets $S\subseteq V_H$ with $\vert S\vert=k$, $A_H$ contains exactly one permutation of $S$.
A digraph $D$ is a \emph{quasi-transitive digraph} if for every pair of vertices $\{u$, $v\}\subset V(D)$, the existence of a $(u$, $v)$-path of length 2 in $D$ implies that $u$ and $v$ are adjacent. A digraph $D$ is \emph{locally in-semicomplete} (respectively, \emph{locally out-semicomplete}) whenever, for each vertex $v\in V(D)$, the induced subdigraph $D\langle N^-(v)\rangle$ (resp. $D\langle N^+(v)\rangle$) is semicomplete; and $D$ is \emph{locally semicomplete} if it is both locally in- and locally out-semicomplete.
A \emph{locally in-tournament} (respectively, \emph{locally out-tournament}) is a digraph $D$, such that for each vertex $v\in V(D)$, the induced subdigraph $D\langle N^-(v)\rangle$ (resp. $D\langle N^+(v)\rangle$) is a tournament. 

\cite{Moon1966297} proved that every strong tournament is vertex-pancyclic; similar results on generalizations of tournaments where obtained by  \cite{Bang-Jensen2009} and by \cite{Li20132749}, where they proved, respectively, that every strong semicomplete digraph is vertex-pancyclic and that every $k$-hypertournament on $n$ vertices, where $3\leq k\leq n-2$, is vertex-pancyclic. \cite{Bang-Jensen1995141} characterized pancyclic and vertex-pancyclic quasi-transitive digraphs and \cite{Bang-Jensen1997101} characterized pancyclic and vertex-pancyclic locally semicomplete digraphs. Other results on locally in-tournament digraphs where obtained by  \cite{Peters2004227} and by  \cite{Tewes2001193,Tewes2002239}. 
Conditions for round decomposable locally semicomplete digraphs and regular multipartite tournaments  to be pancyclic, and vertex-pancyclic where studied by \cite{Bang-Jensen1997101} and by  \cite{Yeo1999137}, respectively, see definitions in the book of \cite{Bang-Jensen2018}.


Concerning to another generalization of tournaments, \cite{Gutin1995153} studied extended semicomplete digraphs.

Let $R$ be a digraph with vertex set $\{v_1,\ldots, v_n\}$, and let $H_1$, \ldots, $H_n$ be a collection of $n$ pairwise vertex disjoint digraphs. The \emph{composition}, denoted by $R[H_1,\ldots,H_n]$, is the digraph $D$ with vertex set  \begin{math}V(D)=\bigcup_{i=1}^k V(D_i)\end{math} and arc set  \begin{math}A(D)=\left(\bigcup_{i=1}^k A(D_i)\right)\cup \{(u_i,u_j) \colon u_i\in V(H_i), u_j\in V(H_j), (v_i,v_j)\in A(R)\}\end{math}. When $R$ is a tournament (respectively, a semicomplete digraph), $D$ is called an \emph{tournament composition} (resp. a \emph{semicomplete composition}).

If $D = R[H_1,\ldots , H_2]$ and each digraph $H_i$ has empty arc set, then $D$ is an \emph{extension} of $R$. When $R$ is a tournament (respectively, a semicomplete digraph), $D$ is called an \emph{extended tournament} (resp. an \emph{extended semicomplete digraph}).


An extended semicomplete digraph $D$ with $k$ partite sets is called a \emph{zigzag digraph} if it has more than four vertices and $k\geq 3$ partite sets $V_1$, $V_2$, $V_3$, \ldots, $V_k$ such that $A(V_2, V_1)=A(V_i, V_2)=A(V_1 , V_i)=\emptyset$ for any $i\in\{3, 4,\ldots, k\}$, \begin{math}|V_1|=|V_2|=|V_3|+|V_4|+\cdots +|V_k|\end{math}. 

Gutin characterized pancyclic and vertex-pancyclic extended semicomplete digraphs:
\begin{theorem}[\cite{Gutin1995153}]

\begin{enumerate}
\item Let $D$ be an extended semicomplete digraph with $k$ partite sets ($k\geq 3$), then $D$  is pancyclic if and only if:
\begin{enumerate}
\item $D$ is strongly connected;
\item it has a spanning subdigraph consisting of a family of vertex disjoint cycles;
\item it is neither a zigzag digraph nor a 4-partite tournament with at least five vertices. 
\end{enumerate}
\item Let $D$ be a pancyclic extended semicomplete digraph with $k$ partite sets, then $D$ is vertex-pancyclic if and only if either:
\begin{enumerate}
\item $k > 3$ or
\item $k=3$ and $D$ has two 2-cycles $Z_1$, $Z_2$ such that $V(Z_1)\cup V(Z_2)$ contains vertices in exactly three partite sets.
\end{enumerate}
\end{enumerate}
\end{theorem}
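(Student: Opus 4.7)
The plan is to exploit the structural description $D = S[I_{n_1},\ldots,I_{n_k}]$, where $S$ is the semicomplete digraph obtained from $D$ by contracting each partite set $V_i$ to a single vertex $v_i$, and $I_{n_i}$ denotes the empty digraph of order $n_i$. Under this description, a cycle of length $\ell$ in $D$ corresponds to a closed walk of length $\ell$ in $S$ in which each vertex $v_i$ is traversed at most $n_i$ times, together with an injection choosing the distinct vertices of $V_i$ used at those visits. This translation reduces the entire problem to one of counting constrained closed walks in a semicomplete digraph, for which classical cycle-extension arguments apply.

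For the necessity direction of part (1), conditions (a) and (b) are forced by the existence of a Hamilton cycle, which is both strongly connected and itself a spanning $1$-factor. To rule out the exceptional families in (c), I would argue from the definitions. In a zigzag digraph, the direction constraints $A(V_2,V_1)=A(V_i,V_2)=A(V_1,V_i)=\emptyset$ for $i\geq 3$ force every cycle touching $V_1$ to obey the phase structure $V_1\to V_2\to V_{\geq 3}\to \cdots \to V_{\geq 3}\to V_1$, and combined with the size equality $|V_1|=|V_2|=\sum_{i\geq 3}|V_i|$ this precludes certain specific lengths in $\{3,\ldots,n\}$; for example, when $k=3$ only multiples of $3$ can occur. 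For a $4$-partite tournament on $n\geq 5$ vertices, a short case analysis on the multiset of part sizes exhibits a concrete missing length (typically $n-1$).

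For the sufficiency direction of part (1), I would induct on $\ell\in\{3,\ldots,n\}$. A $3$-cycle exists because $S$ is strong and semicomplete, hence pancyclic by the results on semicomplete digraphs cited earlier. The inductive step is cycle-extension: given a closed walk $W$ in $S$ of total length $\ell<n$ whose multiplicities $m_i$ satisfy $m_i\leq n_i$, find an arc of $S$ that permits splicing an additional visit to some $v_j$ with $m_j<n_j$ into $W$. The $1$-factor condition guarantees that the global multiplicities are compatible, while strong semicompleteness of $S$ supplies arcs locally. The main technical obstacle is to verify that every blocked splicing attempt forces the digraph into one of the excluded configurations in (c); this is essentially an extremal structural claim that must be proved by case-work on the relative positions of the saturated vertices of $S$.

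For part (2), I would fix a vertex $u\in V_i$ and analyse cycles through $u$. When $k\geq 4$, a fourth partite set supplies enough rerouting flexibility to perform the cycle-extension step with $u$ pinned, yielding cycles of every length through $u$. When $k=3$ the closed walks in $S$ are much more rigid, and the hypothesis that $D$ contains two $2$-cycles $Z_1,Z_2$ whose partite-set supports together cover $\{V_1,V_2,V_3\}$ provides exactly the local bypasses needed to detour a cycle through any prescribed vertex. Conversely, if no such pair of $2$-cycles exists, one exhibits a vertex that cannot lie on a cycle of some particular length, completing the characterization. The hardest step of the entire proof, I expect, is the extremal analysis confirming that zigzag digraphs and small $4$-partite tournaments are \emph{precisely} the obstructions in (c), since this requires delicate structural case-work rather than a single clean global argument.
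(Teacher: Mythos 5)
This statement is Gutin's characterization of pancyclic and vertex-pancyclic extended semicomplete digraphs; the paper only cites it from \cite{Gutin1995153} as background and contains no proof of it, so there is no internal argument to compare yours against. Judged on its own terms, your submission is a plan rather than a proof. The translation of cycles in $D=S[I_{n_1},\ldots,I_{n_k}]$ into closed walks in the contracted semicomplete digraph $S$ with multiplicities $m_i\le n_i$ is a reasonable framework, and the necessity of (a) and (b) from Hamiltonicity is fine. But the entire content of part (1) lies in the sufficiency step you defer: you state that one must ``verify that every blocked splicing attempt forces the digraph into one of the excluded configurations in (c)'' and that this ``must be proved by case-work,'' without carrying out that case-work. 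That extremal analysis --- showing that the zigzag digraphs and the small $4$-partite tournaments are \emph{exactly} the obstructions --- is the theorem; leaving it as a to-do item means the sufficiency direction is unproved. The necessity of (c) is also only gestured at: the ``only multiples of $3$'' observation is correct for $k=3$ zigzag digraphs, but for $k\ge 4$ the arcs among $V_3,\ldots,V_k$ are unconstrained by the zigzag definition, so the clean phase structure you invoke breaks down and you would need the size condition $|V_1|=|V_2|=\sum_{i\ge 3}|V_i|$ in an essential, quantitative way to exhibit a missing cycle length; similarly, ``a short case analysis... exhibits a concrete missing length (typically $n-1$)'' for $4$-partite tournaments is an assertion, not an argument.

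Part (2) has the same character. The claim that a fourth partite set ``supplies enough rerouting flexibility'' when $k\ge 4$, and that the absence of the two covering $2$-cycles when $k=3$ yields a vertex missing from some cycle length, are both plausible but unsubstantiated; the converse direction in particular requires producing an explicit witness, which you do not do. In short, there is no error of approach here --- this is broadly how one would attack the theorem --- but the proposal postpones precisely the steps that constitute the proof, so it cannot be accepted as one. If your goal is only to use the statement, the appropriate course (and the one the paper takes) is to cite \cite{Gutin1995153} rather than reprove it.
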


Let $D$ be a g.s. of $D_1$, $D_2$, \ldots, $D_k$. Observe that, if all exterior arcs between two summands have the same direction, then $D$ is a tournament composition. And if $A(D_i)$ is empty  for each $i\in\{1,2,\ldots,k\}
$ and all exterior arcs between two summands have the same direction, then $D$ is an extended tournament (and thus $D$ is an extended semicomplete digraph). 
In our results we also work with a vertex partition, but instead of asking that each partite set to be independent, we ask for each partite set to have a Hamiltonian cycle; also, we ask that the arcs between two partite sets be asymmetric and in any direction. In this way, our problem has similarities with Gutin's problem but they are different problems (Figure \ref{fig:not ordinary}).   

The following three theorems are previous results on the existence of cycles in generalized sums of digraphs, they will be very useful in the proof of the main result of the present paper.


\begin{theorem}[\cite{Cordero-Michel20161763}]
\label{theorem strong then Hamiltonian}
Let $D_1$, $D_2$, \ldots, $D_k$ be a collection of $k\geq 2$ vertex disjoint Hamiltonian digraphs and $D\in \oplus_{i=1}^k D_i$. If $D$ is strong, then $D$ is Hamiltonian.
\end{theorem}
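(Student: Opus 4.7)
The plan is to argue by induction on $k$, with base case $k=2$ being the main structural step; the inductive step will reduce $k$ summands to two by removing a well-chosen summand and then re-applying the base case.

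For the base case, let $C_1=v_0v_1\cdots v_{n_1-1}v_0$ and $C_2=u_0u_1\cdots u_{n_2-1}u_0$ be Hamiltonian cycles in $D_1$ and $D_2$. I would first look for a splicing by two cross-arcs: indices $a,b$ with $(v_a,u_b)\in A(D)$ and $(u_{b-1},v_{a+1})\in A(D)$, which yields the Hamiltonian cycle $v_{a+1}v_{a+2}\cdots v_au_bu_{b+1}\cdots u_{b-1}v_{a+1}$. Define $f(i,j)=1$ if $(v_i,u_j)\in A(D)$ and $0$ otherwise; the required pair $a,b$ exists unless $f$ is constant on every orbit of the shift $\sigma(i,j)=(i+1,j-1)$. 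There are exactly $d=\gcd(n_1,n_2)$ such orbits, indexed by $(i+j)\bmod d$, each of size $\mathrm{lcm}(n_1,n_2)$; if $d=1$ the unique orbit is the whole product and strongness immediately contradicts monochromaticity of $f$. When $d>1$ the cross-arc direction depends only on $(i+j)\bmod d$, and I would handle this rigid arithmetic case by exhibiting a Hamiltonian cycle that alternates between $V_1$ and $V_2$ more than twice, stepping through residue classes so that every vertex of both cycles is covered.

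For the inductive step with $k\geq 3$, pass to the quotient $R$ on $\{w_1,\dots,w_k\}$, where $(w_i,w_j)\in A(R)$ iff some arc of $D$ goes from $V(D_i)$ to $V(D_j)$. Then $R$ is semicomplete and, since $D$ is strong, so is $R$. Whenever some $R-w_j$ is strong, $D-V(D_j)$ is a strong g.s.\ of $k-1$ Hamiltonian digraphs (each summand's own Hamiltonicity gives internal connectivity), so induction furnishes a Hamiltonian cycle $H$ of $D-V(D_j)$; then $D$ is a strong g.s.\ of the two Hamiltonian digraphs $D\langle V(H)\rangle$ and $D_j$, and the base case finishes. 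The existence of such $j$ follows for $k\geq 4$ from the cited vertex-pancyclicity of strong semicomplete digraphs: every vertex of $R$ lies on a $(k-1)$-cycle, so for some $j$ the subdigraph $R-w_j$ is Hamiltonian, hence strong. The remaining special case $k=3$ with no strong $R-w_j$ forces $R$ to be a directed $3$-cycle tournament, meaning all cross-arcs of $D$ follow a rotational pattern $V(D_i)\to V(D_{i+1})$ for $i\in\mathbb{Z}_3$, and a Hamiltonian cycle is then built by hand by concatenating the $C_i$'s in cyclic order via any cross-arcs.

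The main obstacle is the base case with $d>1$: producing an alternating Hamiltonian cycle compatible with the orbit-color pattern requires careful bookkeeping of which residue class and which direction of alternation is used at each step, and it must cover all vertices of both $C_1$ and $C_2$. A secondary subtlety is identifying precisely those $R$ in which no summand can be removed while preserving strongness and then closing the induction there; the analysis above suggests this is confined to very rigid rotational configurations which admit a direct construction.
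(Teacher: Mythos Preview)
This theorem is not proved in the paper; it is quoted from \cite{Cordero-Michel20161763}. The paper does re-derive the case $k=2$ (Corollary~\ref{corollary hamiltonian}), but only by invoking two further cited results: if $D$ has a good pair (Definition~\ref{def:good pair}) then Proposition~\ref{propo merging cycles with a good pair of arcs} splices $C_1$ and $C_2$ into a Hamiltonian cycle, and if $D$ has no good pair then Theorem~\ref{theorem strong and no good pair then vertex pancyclic} (also imported from another paper) makes $D$ vertex-pancyclic, hence Hamiltonian. There is no self-contained argument here to compare against for general $k$.

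Your base case opens the same way: your splicing pair $(v_a,u_b)$, $(u_{b-1},v_{a+1})$ is exactly a good pair, and the splice is Proposition~\ref{propo merging cycles with a good pair of arcs}. You diverge in the no-good-pair situation. Your observation that absence of a good pair forces $f$ to be constant on each of the $d=\gcd(n_1,n_2)$ orbits of $\sigma$ is correct (since $f(s,r)=1\Rightarrow f(s+1,r-1)=1$ and the orbits are finite cycles), and the $d=1$ contradiction with strongness is clean. But for $d>1$ you only promise to ``exhibit a Hamiltonian cycle that alternates between $V_1$ and $V_2$ more than twice,'' and you flag this yourself as the main obstacle. That construction is never supplied, so your base case has a genuine gap exactly where the paper outsources the work to the cited vertex-pancyclicity theorem.

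Your inductive step is essentially sound, with one slip: to find $j$ with $R-w_j$ strong you want \emph{pancyclicity} of the strong semicomplete digraph $R$ (a $(k-1)$-cycle exists and therefore \emph{avoids} some $w_j$), not vertex-pancyclicity (which places each $w_j$ \emph{on} a $(k-1)$-cycle, the opposite of what you need). With that correction the reduction to two summands goes through for $k\ge 4$, and your handling of the residual $k=3$ directed-triangle case is correct.
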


\begin{figure}
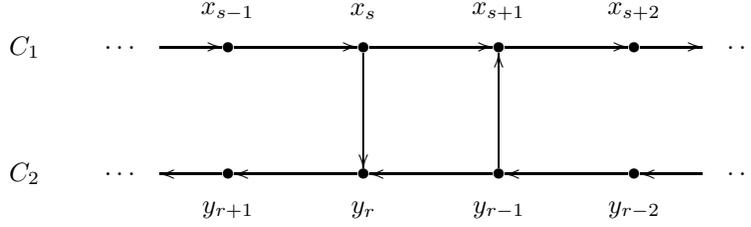

\begin{center}   
  \begin{displaymath}
\xygraph{
	!{<0cm,0cm>;<1.8cm,0cm>:<0cm,1.2cm>::}
 	!{(-0.8,0) }*{\ldots}
	!{(-0.5,0) }*{}="i-1"
	!{(0,0) }*{\bullet}="i0"
 	!{(-0.8,1.4) }*{\ldots}
	!{(-0.5,1.4) }*{}="h-1"
	!{(0,1.4) }*{\bullet}="h0"
	!{(1,0) }*{\bullet}="i1"
	!{(1,1.4) }*{\bullet}="h1"
	!{(2,0) }*{\bullet}="i2"
	!{(2,1.4) }*{\bullet}="h2"
	!{(3,0) }*{\bullet}="i3"
	!{(3.5,0) }*{}="i4"
 	!{(3.8,0) }*{\ldots}
	!{(3,1.4) }*{\bullet}="h3"
 	!{(3.8,1.4) }*{\ldots}
	!{(3.5,1.4) }*{}="h4"
	!{(0,-0.4) }*{y_{r+1}}
	!{(1,-0.4) }*{y_{r}}
	!{(2,-0.4) }*{y_{r-1}}
	!{(3,-0.4) }*{y_{r-2}}
	!{(0,1.8) }*{x_{s-1}}
	!{(1,1.8) }*{x_{s}}
	!{(2,1.8) }*{x_{s+1}}
	!{(3,1.8) }*{x_{s+2}}
	!{(-1.5,1.4) }*{C_1}
	!{(-1.5,0) }*{C_2}
	!~:{@{->}}
	"i0":"i-1"
	"i1":"i0"
	"i2":"i1"
	"i3":"i2"	
	"i4":"i3"
	"h-1":"h0"
	"h0":"h1"
	"h1":"h2"
	"h2":"h3"	
	"h3":"h4"	
	"h1":"i1"	    
    "i2":"h2"
    }
 \end{displaymath}
\caption{A good pair of arcs.}
\label{fig:good pair}
\end{center} 
\end{figure}

\begin{definition}[\cite{Galeana2014315}]
\label{def:good pair}
Let $D$ be a digraph and let $C_1=(x_0$, $x_1$, \ldots, $x_{n-1}$, $x_0)$ and $C_2=(y_0$, $y_1$, \ldots, $y_{m-1}$, $y_0)$ be two vertex disjoint cycles in $D$. A pair of arcs $x_s\to y_r$, $y_{r-1}\to x_{s+1}$ where $s\in \{0$, 1, \ldots, $n-1\}$, $r\in \{0$, 1, \ldots, $m-1\}$, and $s+1$ and $r-1$ are taken modulo $n$ and $m$, respectively,  is  a \emph{good pair of arcs} (Figure \ref{fig:good pair}).

Whenever there is a good pair of arcs between two vertex disjoint cycles $C_1$ and $C_2$, we simply say that there is a good pair.
\end{definition}

\begin{theorem}[\cite{Cordero-Michel2020+}]
\label{theorem strong and no good pair then vertex pancyclic}
Let $D_1$ and $D_2$ be two digraphs with Hamiltonian cycles, $C_1=(x_0$, $x_1$, \ldots, $x_{n-1}$, $x_0)$ and $C_2=(y_0$, $y_1$, \ldots,  $y_{m-1}$, $y_0)$, respectively, and $D\in D_1\oplus D_2$. If $D$ is strong and contains no good pair, then $D$ is vertex-pancyclic.
\end{theorem}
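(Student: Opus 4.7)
My approach is to exploit the "no good pair" hypothesis to pin down the orientation of every exterior arc between $C_1$ and $C_2$ into a rigid anti-diagonal pattern, and then to splice arcs of $C_1$, arcs of $C_2$, and two exterior arcs to realize every required cycle length through any prescribed vertex.

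First, I would reformulate the hypothesis. Let $A_{s,r}=1$ iff $x_s\to y_r$ and $A_{s,r}=0$ iff $y_r\to x_s$, with $s,r$ taken modulo $n$ and $m$ respectively. The absence of a good pair is precisely the implication
$$A_{s,r}=1 \;\Longrightarrow\; A_{s+1,\,r-1}=1.$$
Iterating this along the orbit of $(s,r)$ under the shift $(s,r)\mapsto(s+1,r-1)$ forces each orbit to be constant. This shift partitions the index set into exactly $d=\gcd(n,m)$ orbits, indexed by $s+r\pmod d$, each of size $\mathrm{lcm}(n,m)$. So there are $d$ anti-diagonals $L_0,\ldots,L_{d-1}$, each either uniformly positive (all arcs $x\to y$) or uniformly negative (all arcs $y\to x$). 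Strong connectivity of $D$ then forces both the set $J_+$ of positive indices and the set $J_-$ of negative indices to be non-empty; in particular $d\ge 2$.

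Next, fix a vertex $v$; by symmetry take $v=x_0$, and fix $k\in\{3,\ldots,n+m\}$. I would construct a $k$-cycle through $x_0$ via one of two splicing templates. For $k\in\{n+1,\ldots,n+m\}$, take
$$x_0,x_1,\ldots,x_a,\,y_b,y_{b+1},\ldots,y_c,\,x_{a+1},x_{a+2},\ldots,x_{n-1},x_0,$$
where $x_a\to y_b$ lies on a positive diagonal and $y_c\to x_{a+1}$ on a negative one; its length is $n+1+((c-b)\bmod m)$, and varying $(a,b,c)$ subject to the diagonal constraints $a+b\equiv j_+$ and $a+1+c\equiv j_-\pmod d$ covers every value in this range. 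For $k\in\{3,\ldots,n\}$, one uses the truncated template
$$x_0,x_1,\ldots,x_a,\,y_b,y_{b+1},\ldots,y_c,\,x_0,$$
where $y_c\to x_0$ is a negative exterior arc, of length $a+((c-b)\bmod m)+2$. The shortest cycles ($k=3$) reduce to finding $j_+\in J_+$ and $j_-\in J_-$ with $j_--j_+\equiv\pm 1\pmod d$, which exists because any non-trivial partition of the cyclic group of order $d$ must contain adjacent elements in different parts.

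The main obstacle is the detailed bookkeeping needed to verify that, for each $k$, the splicing parameters can be chosen to simultaneously satisfy the two diagonal constraints, respect the cyclic order, and yield exactly length $k$. The key lever is that each anti-diagonal has size $\mathrm{lcm}(n,m)$ and therefore meets every row index $s\in\{0,\ldots,n-1\}$ and every column index $r\in\{0,\ldots,m-1\}$, so the entry and exit parameters of the splicing can be shifted freely along their respective diagonals. A handful of edge cases — the shortest cycles, the Hamiltonian cycle (whose existence is already guaranteed by Theorem \ref{theorem strong then Hamiltonian}), and a starting vertex $v\in V(C_2)$ — are then handled by symmetry or by the cyclic-adjacency argument sketched above.
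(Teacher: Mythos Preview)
The paper does not actually prove this theorem: it is quoted from \cite{Cordero-Michel2020+} as a previous result and used as a black box. So there is no ``paper's own proof'' to compare against, and your proposal must be judged on its own merits.

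Your structural analysis is correct and is the right starting point. The ``no good pair'' hypothesis is exactly the implication $A_{s,r}=1\Rightarrow A_{s+1,r-1}=1$; the orbits of $(s,r)\mapsto(s+1,r-1)$ are the $d=\gcd(n,m)$ anti-diagonals indexed by $s+r\pmod d$; each is entirely in $J_+$ or entirely in $J_-$; and strongness forces both $J_+$ and $J_-$ to be non-empty.

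The gap is in the cycle-building step. Your two templates use exactly two exterior arcs, and this is not enough. Take $n=m=4$ (so $d=4$) with $J_+=\{0,2\}$ and $J_-=\{1,3\}$, i.e.\ $x_s\to y_r$ iff $s+r$ is even. This digraph is strong and has no good pair. In your long template the constraint $a+b\equiv j_+$ and $(a{+}1)+c\equiv j_-\pmod d$ forces $c-b\equiv j_--j_+-1\pmod d$, and here $J_--J_+=\{1,3\}$, so $c-b\bmod 4\in\{0,2\}$ only. Your short template through $x_0$ needs $a+b\equiv j_+$ and $c\equiv j_-$, and a direct check over $a\in\{0,1,2,3\}$ shows that no choice yields length $4$ or length $6$. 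Yet $D$ \emph{is} vertex-pancyclic: for instance $(x_0,y_0,x_1,y_1,x_0)$ is a $4$-cycle and $(x_0,y_0,x_1,y_1,x_2,x_3,x_0)$ is a $6$-cycle through $x_0$, each using four exterior arcs. So the ``detailed bookkeeping'' you defer is not merely bookkeeping: for some diagonal patterns (typically when $J_+$ and $J_-$ are unions of residue classes modulo a proper divisor of $d$) one must allow zig-zag cycles with more than two exterior arcs, and your two templates as stated cannot produce all required lengths. The proof needs either a richer family of templates or an inductive/merging argument that handles these cases.
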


Let $D$ be a digraph. A succession of vertices  $\mathcal{C}=v_0v_1\cdots v_{t-1}v_0$ is an \emph{anti-directed $t$-cycle} whenever $v_i\neq v_j$ for each $i\neq j$, $t$ is even and, for each $i\equiv 0 \pmod{2}$, $\{(v_i, v_{i+1})$, $(v_i, v_{i-1})\}\subset A(D)$ or $\{(v_i, v_{i+1})$, $(v_i, v_{i-1})\}\subset A(D)$. 

We may assume that every anti-directed cycle starts with a forward arc, else we might relabel the subscripts. 
Let $D_1$, $D_2$, \ldots, $D_k$ be a collection of pairwise vertex disjoint digraphs, and $D\in \oplus_{i=1}^k D_i$. 
An anti-directed 4-cycle $\mathcal{C}= v_0 v_1 v_2 v_{3}v_0$ in $D$ will be called a \emph{good cycle} whenever at least one of the following conditions holds \begin{math}\{(v_0,v_1),~(v_2,v_3)\}\subset A(D)\setminus \left(\bigcup_{i=1}^k A(D_i)\right)\end{math} or \begin{math}\{(v_2,v_1),~(v_0,v_3)\}\subset A(D)\setminus \left(\bigcup_{i=1}^k A(D_i)\right)\end{math}.

\begin{theorem}[\cite{Cordero-Michel2020+}]
\label{theorem strong and no antidirected 4-cycle then vertex pancyclic}
Let $D_1$, $D_2$, \ldots, $D_k$ be a collection of $k\geq 2$ vertex disjoint digraphs with Hamiltonian cycles, $C_1$, $C_2$, \ldots, $C_k$, respectively, and $D\in \oplus_{i=1}^k D_i$. If $D$ is strong and contains no good cycle, then $D$ is vertex-pancyclic.
\end{theorem}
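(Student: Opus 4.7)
The plan is induction on $k$, with the base case $k=2$ reducing to Theorem~\ref{theorem strong and no good pair then vertex pancyclic}. The key observation for the base case is that any good pair between $C_1$ and $C_2$ produces a good cycle in $D$: given a good pair $x_s\to y_r$, $y_{r-1}\to x_{s+1}$, the four vertices $x_s,\,y_r,\,y_{r-1},\,x_{s+1}$ form an anti-directed $4$-cycle whose inner arcs $x_s\to x_{s+1}$ and $y_{r-1}\to y_r$ come from the Hamiltonian cycles $C_1$ and $C_2$, while the two exterior arcs $(x_s,y_r)$ and $(y_{r-1},x_{s+1})$ supply the pair required by the definition of a good cycle. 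Hence no good cycle implies no good pair, and Theorem~\ref{theorem strong and no good pair then vertex pancyclic} finishes the case $k=2$.

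For the inductive step, fix $k\geq 3$, $D\in\oplus_{i=1}^k D_i$ strong and free of good cycles, and a target vertex $v$, without loss of generality $v\in V(D_1)$. Theorem~\ref{theorem strong then Hamiltonian} supplies a Hamiltonian cycle $C$ of $D$. For every $S\subseteq\{1,\ldots,k\}$ with $1\in S$, the induced subdigraph $D_S=D\langle\bigcup_{i\in S}V(D_i)\rangle$ belongs to $\oplus_{i\in S}D_i$, inherits the no-good-cycle property from $D$, and has Hamiltonian summands. Whenever $D_S$ is strong and $|S|<k$, the inductive hypothesis produces cycles through $v$ of each length $3,\ldots,\sum_{i\in S}n_i$. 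Taking $S$ to be a maximal such strong sub-collection thus covers every length up to some threshold $n^{\ast}$.

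The remaining lengths $\ell>n^{\ast}$ require incorporating vertices from summands outside any maximal strong sub-collection into a cycle through $v$. The plan is to start from the Hamiltonian cycle $C$ and perform successive chord-shortenings: each shortening replaces a contiguous sub-path of $C$ avoiding $v$ by a single chord arc, producing cycles through $v$ of every intermediate length. Showing that enough shortening chords exist is where the no-good-cycle hypothesis becomes crucial, since a missing chord of the required type would produce precisely a forbidden anti-directed $4$-cycle whose two parallel arcs are both exterior. The main obstacle will be this chord-availability step: converting the local hypothesis (forbidding a single $4$-vertex configuration) into enough global chords along $C$ to realize every missing length while preserving the passage through $v$. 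The argument is likely to proceed by case analysis on the direction patterns of exterior arcs between consecutive summand segments of $C$, using "no good cycle" to force these patterns to be monotone enough to admit the desired shortenings.
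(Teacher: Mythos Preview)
This theorem is not proved in the present paper. It is quoted from \cite{Cordero-Michel2020+} as a tool (alongside Theorems~\ref{theorem strong then Hamiltonian} and~\ref{theorem strong and no good pair then vertex pancyclic}) and no argument for it appears anywhere in the text, so there is nothing here to compare your proposal against.

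On the merits of your sketch itself: your base case $k=2$ is correct. The observation that a good pair $x_s\to y_r$, $y_{r-1}\to x_{s+1}$ yields the anti-directed $4$-cycle $x_s\,y_r\,y_{r-1}\,x_{s+1}\,x_s$ with the exterior pair $\{(x_s,y_r),(y_{r-1},x_{s+1})\}$ is exactly right, so ``no good cycle'' implies ``no good pair'' and Theorem~\ref{theorem strong and no good pair then vertex pancyclic} finishes that case.

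Your inductive step, however, is not a proof but an outline with a self-identified gap. Two concrete issues: first, you need a proper subset $S\ni 1$ with $|S|\geq 2$ and $D_S$ strong to invoke the induction hypothesis at all, and you have not argued that such an $S$ exists (for $k\geq 3$ there is no a priori reason a two-summand induced subdigraph containing $D_1$ must be strong). Second, and more seriously, the ``chord-shortening'' phase is only described heuristically: you assert that a missing chord ``would produce precisely a forbidden anti-directed $4$-cycle whose two parallel arcs are both exterior,'' but you have not specified which four vertices you are using, nor why both parallel arcs are guaranteed to be exterior rather than lying inside some $D_i$. Turning the no-good-cycle hypothesis into a supply of chords of every needed length along the Hamiltonian cycle, while keeping $v$ on the cycle, is the entire content of the theorem for $k\geq 3$, and that content is absent from your proposal.
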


In this paper we complete the study of pancyclism and vertex-pancyclism in a strong g.s. of Hamiltonian digraphs. In particular, we consider two vertex disjoint Hamiltonian digraphs, $D_1$ and $D_2$, of order $n_1$ and $n_2$, respectively, where $n_1\geq n_2$ and $d=\gcd(n_1$, $n_2)$. We prove that a strong digraph in $D_1 \oplus D_2$ is a vertex-pancyclic digraph, a pancyclic digraph or a Hamiltonian digraph containing a directed cycle of length $l$ for each \begin{math}l\in\{3, \ldots, n_1+1\}\cup\{n_1+jd \colon 0\leq j < \frac{n_{2}}{d}\}\end{math}. 
We also prove that, if $D_1$, $D_2$,  \ldots, $D_k$ is a collection of pairwise vertex disjoint Hamiltonian digraphs and $n_i=|V(D_i)|$ for each $i\in\{1$, \ldots, $k\}$, then every strong digraph $D\in \oplus_{i=1}^k D_i$ is vertex-pancyclic, pancyclic or Hamiltonian and contains a cycle of length $l$ for each \begin{math}l\in \{3, \ldots, \max\{\left(\sum_{i\in S} n_i\right) + 1 \colon S\subset\{1, \ldots, k\} \text{ with } |S|=k-1\}\}\end{math}.

\begin{figure}
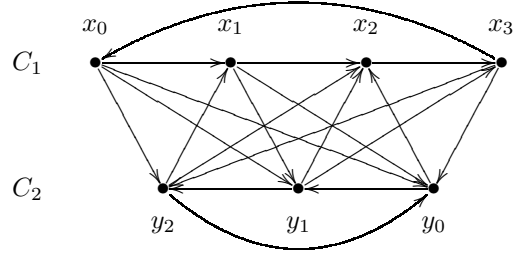

\begin{center}   
 \begin{displaymath}
\xygraph{
	!{<0cm,0cm>;<1.8cm,0cm>:<0cm,1.2cm>::}
	!{(0.5,0) }*{\bullet}="i0"
	!{(0,1.4) }*{\bullet}="h0"
	!{(1.5,0) }*{\bullet}="i1"
	!{(1,1.4) }*{\bullet}="h1"
	!{(2.5,0) }*{\bullet}="i2"
	!{(2,1.4) }*{\bullet}="h2"
	!{(3,1.4) }*{\bullet}="h3"
	!{(0.5,-0.4) }*{y_{2}}
	!{(1.5,-0.4) }*{y_{1}}
	!{(2.5,-0.4) }*{y_{0}}
	!{(0,1.8) }*{x_{0}}
	!{(1,1.8) }*{x_{1}}
	!{(2,1.8) }*{x_{2}}
	!{(3,1.8) }*{x_{3}}
	!{(-.5,1.4) }*{C_1}
	!{(-.5,0) }*{C_2}
	!~:{@{->}}
	"i1":"i0"
	"i2":"i1"
	"i0"-@/^-0.8cm/@{->}"i2"
	"h0":"h1"
	"h1":"h2"
	"h2":"h3"
	"h3"-@/^-0.8cm/@{->}"h0"	
	"h0":"i0"
	"h0":"i1"
	"h0":"i2"
	"i0":"h1"
	"i0":"h2"
	"h3":"i0"
	"i1":"h2"
	"i1":"h3"	
	"h1":"i1"
	"h1":"i2"
    "i2":"h2"
    "h3":"i2"
    }
 \end{displaymath}
\caption{This digraph in $C_1\oplus C_2$ is a complete 5-partite strong digraph with partite sets $V_1=\{x_0,x_2\}$, $V_2=\{x_1,x_3\}$, $V_3=\{y_0\}$, $V_4=\{y_1\}$ and $V_5=\{y_2\}$, it has a spanning subdigraph consisting of two vertex disjoint cycles and it is not ordinary (consider the partite sets $V_1$ and $V_2$) nor a zigzag digraph ($|V_1|=|V_2|\neq |V_3|+|V_4|+|V_5|$). In Lemma \ref{lemma one singular vertex} we will see that this digraph is pancyclic.}
\label{fig:not ordinary}
\end{center} 
\end{figure}

\section{Definitions}
\label{sec:definitions}
In this paper $D=(V(D),A(D))$ will denote a \emph{digraph}. An arc $(u,v)\in A(D)$ will also be denoted by $u\to v$. Two different vertices $u$ and $v$ are adjacent if $u\to v$ or $v\to u$. 
Let $A$ and $B$ be two sets of vertices or subdigraphs of a digraph $D$, we define the set of arcs $(A,B)$, as the set of all arcs with tail in $A$ (or in the vertex set of $A$) and head in $B$ (or in the vertex set of $B$). If $A=\{a\}$ or $B=\{b\}$, we use the notation $(a,B)$ or $(A,b)$,  respectively, instead of $(A,B)$. 
Also, we denote by $A\to B$ whenever for each vertex $a$ in $A$ and each vertex $b$ in $B$ we have $a\to b$, and we denote by $A\mapsto B$ whenever $A\to B$ and $(B,A)$ is empty.  If $A=\{a\}$ or $B=\{b\}$, we use the notation $a\to B$ or $A\to b$,  respectively, instead of $A\to B$ and $a\mapsto B$ or $A\mapsto b$,  respectively, instead of $A\mapsto B$. 

The subdigraph induced by a set of vertices $U\subseteq V(D)$ will be denoted by 
$ D\langle U\rangle$;
and if $H$ is a subdigraph of $D$, the subdigraph induced by $V(H)$ will be denoted simply by 
$D\langle H\rangle$. 

A digraph is \emph{strong} whenever for each pair of different vertices $u$ and $v$, there exist a $uv$-path and a $vu$-path.

A \emph{spanning subdigraph} $E$ of $D$ is a subdigraph of $D$ such that $V(E)=V(D)$. We say that $E$ spans $D$.

A \emph{cycle-factor} of a digraph $D$ is a collection $\mathcal{F}$ of pairwise vertex disjoint cycles in $D$ such that each vertex of $D$ belongs to a member of $\mathcal{F}$. A cycle-factor consisting of $k$ cycles is a \emph{$k$-cycle-factor}. 

A path (cycle) in $D$ containing each vertex of $D$ is a \emph{Hamiltonian path} (\emph{Hamiltonian cycle}).

For further details we refer the reader to the book of \cite{Bang-Jensen2009}.

\begin{definition}[\cite{Cordero-Michel20161763}]
\label{definition exterior}
Let $D_1$, $D_2$, \ldots, $D_k$ be a collection of pairwise vertex disjoint digraphs and $D\in \oplus_{i=1}^k D_i$. We will say that $e\in A(D)$ is an \emph{exterior arc} iff \begin{math}e\in A(D)\setminus \left(\bigcup_{i=1}^k A(D_i)\right)\end{math}. 
\end{definition}

\begin{remark}
Clearly the g.s. of two vertex disjoint digraphs if commutative and so is well defined.
Let $D_1$, $D_2$, $D_3$ be three pairwise vertex disjoint digraphs. It is easy to see that the sets \begin{math}(D_1\oplus D_2)\oplus D_3 = \bigcup_{D\in D_1\oplus D_2} D\oplus D_3\end{math} and \begin{math}D_1 \oplus (D_2\oplus D_3)=\bigcup_{D'\in D_2\oplus D_3} D_1\oplus D'\end{math} satisfy \begin{math}\oplus_{i=1}^3 D_i=(D_1\oplus D_2)\oplus D_3 = D_1\oplus(D_2\oplus D_3)\end{math}, and thus the g.s. of three digraphs is well defined and is associative and commutative. By means of an inductive process it is easy to see that the g.s. of $k$ vertex disjoint digraphs is well defined, and is associative and commutative.
\end{remark}

\begin{notation}
Let $k_1$ and $k_2$ be two positive integers, where $k_1\leq k_2$. We will denote by $[k_1,~k_2]$ the set of integers $\{k_1$, $k_1+1$, \ldots, $k_2\}$ when $k_1<k_2$, and $[k_1,~k_2]$ denotes the singleton $\{k_1\}$ when $k_1=k_2$ .
\end{notation}

\begin{remark}[\cite{Cordero-Michel20161763}]
\label{remark subsuma}
Let $D_1$, $D_2$, \ldots, $D_k$ be a collection of pairwise vertex disjoint digraphs, $D\in\oplus_{i=1}^k D_i$ and $J\subset [1,~k]$. The induced subdigraph of $D$ by $\bigcup_{j\in J} V(D_j)$, $H=D\langle \bigcup_{j\in J} V(D_j) \rangle$, belongs to $\oplus_{j\in J} D_j$.
\end{remark}

\begin{notation}
Let $k$ be a positive integer and $A$ be a set of non-negative integers. We will denote by $kA$ the set of integers $\{ka \colon a \in A\}$.
\end{notation}

\vspace{3mm}
From now on the subscripts for vertices in a cycle $C=(u_0$, $u_1$, \ldots, $u_{t-1}$, $u_0)$ will be taken modulo $l(C)=t$.
\vspace{3mm}

\section{Preliminary results}
\label{sec:preliminary results}

Along this section we will use a result by \cite{Galeana2014315}, in order to prove interesting properties of strong digraphs in the g.s. of two Hamiltonian digraphs, which allow us to find cycles of several lengths.

\begin{proposition}[\cite{Galeana2014315}]
\label{propo merging cycles with a good pair of arcs}
Let $C_1$ and $C_2$ be two disjoint cycles in a digraph $D$. If there is a good pair between them, then there is a cycle with vertex set $V(C_1)\cup V(C_2)$ (Figure \ref{fig:good pair of arcs}).
\end{proposition}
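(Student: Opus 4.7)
The statement is essentially a direct construction, so my plan is to exhibit a single explicit cycle built from the two given cycles together with the good pair of arcs.

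Write $C_1=(x_0,x_1,\ldots,x_{n-1},x_0)$ and $C_2=(y_0,y_1,\ldots,y_{m-1},y_0)$, and let the good pair be $x_s\to y_r$ and $y_{r-1}\to x_{s+1}$, with subscripts modulo $n$ and $m$ respectively. The idea is to delete the arc $x_s\to x_{s+1}$ from $C_1$ and the arc $y_{r-1}\to y_r$ from $C_2$, obtaining a Hamiltonian path of $C_1$ from $x_{s+1}$ to $x_s$ and a Hamiltonian path of $C_2$ from $y_r$ to $y_{r-1}$; then splice them together using the two arcs of the good pair.

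Concretely, I would exhibit the cycle
\[
C \;=\; \bigl(x_{s+1},\,x_{s+2},\,\ldots,\,x_{s-1},\,x_s,\,y_r,\,y_{r+1},\,\ldots,\,y_{r-2},\,y_{r-1},\,x_{s+1}\bigr),
\]
where the segment from $x_{s+1}$ to $x_s$ uses the arcs of $C_1$ (all of them except $x_s\to x_{s+1}$), the arc $x_s\to y_r$ is the first arc of the good pair, the segment from $y_r$ to $y_{r-1}$ uses the arcs of $C_2$ (all of them except $y_{r-1}\to y_r$), and the arc $y_{r-1}\to x_{s+1}$ is the second arc of the good pair. I would then check that the vertices listed are pairwise distinct (since $C_1$ and $C_2$ are vertex disjoint and each individually consists of distinct vertices), so $C$ is indeed a cycle, and that $V(C)=V(C_1)\cup V(C_2)$ by construction.

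There is no real obstacle here; the whole proof is the construction above, and the only routine verifications are the distinctness of vertices and that every arc used actually belongs to $D$ (the internal arcs come from $C_1\subseteq D$ and $C_2\subseteq D$, while the two connecting arcs are exactly the good pair, which by hypothesis lies in $A(D)$).
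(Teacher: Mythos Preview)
Your construction is correct and is exactly the one the paper intends: the paper does not give a written proof (the proposition is cited from \cite{Galeana2014315}), but Figure~\ref{fig:good pair of arcs} depicts precisely the cycle you describe, obtained by deleting the arcs $x_s\to x_{s+1}$ and $y_{r-1}\to y_r$ and inserting the good pair $x_s\to y_r$, $y_{r-1}\to x_{s+1}$.
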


\begin{figure}
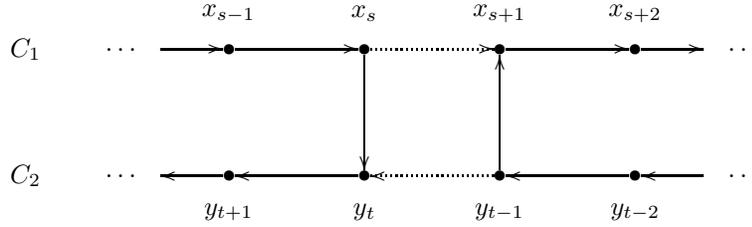

\begin{center}   
 \begin{displaymath}
\xygraph{
	!{<0cm,0cm>;<1.8cm,0cm>:<0cm,1.2cm>::}
 	!{(-0.8,0) }*{\ldots}
	!{(-0.5,0) }*{}="i-1"
	!{(0,0) }*{\bullet}="i0"
 	!{(-0.8,1.4) }*{\ldots}
	!{(-0.5,1.4) }*{}="h-1"
	!{(0,1.4) }*{\bullet}="h0"
	!{(1,0) }*{\bullet}="i1"
	!{(1,1.4) }*{\bullet}="h1"
	!{(2,0) }*{\bullet}="i2"
	!{(2,1.4) }*{\bullet}="h2"
	!{(3,0) }*{\bullet}="i3"
	!{(3.5,0) }*{}="i4"
 	!{(3.8,0) }*{\ldots}
	!{(3,1.4) }*{\bullet}="h3"
 	!{(3.8,1.4) }*{\ldots}
	!{(3.5,1.4) }*{}="h4"
	!{(0,-0.4) }*{y_{t+1}}
	!{(1,-0.4) }*{y_{t}}
	!{(2,-0.4) }*{y_{t-1}}
	!{(3,-0.4) }*{y_{t-2}}
	!{(0,1.8) }*{x_{s-1}}
	!{(1,1.8) }*{x_{s}}
	!{(2,1.8) }*{x_{s+1}}
	!{(3,1.8) }*{x_{s+2}}
	!{(-1.5,1.4) }*{C_1}
	!{(-1.5,0) }*{C_2}
	!~:{@{->}}
	"i0":"i-1"
	"i1":"i0"
	"i3":"i2"	
	"i4":"i3"
	"h-1":"h0"
	"h0":"h1"
	"h2":"h3"	
	"h3":"h4"	
	"h1":"i1"	    
    "i2":"h2"
    !~:{@{.>}}
    "i2":"i1"
    "h1":"h2"
    }
 \end{displaymath}
\caption{A cycle using a good pair of arcs.}
\label{fig:good pair of arcs}
\end{center} 
\end{figure}

\begin{lemma}
\label{lemma subscripts}
Let $D$ be a digraph, $C=(u_0$, $u_1$, \ldots, $u_{t-1}$, $u_0)$ a cycle in $D$, $l$ a positive integer and $d=\gcd(t,~l)$. If $u_s\in V(C)$, then \begin{math}\{u_{s-jl}\}_{j\geq 0}= \{u_{s+il}\}_{i\geq 0}=\{u_{s+id}\}_{i\geq 0}\end{math}.  
\end{lemma}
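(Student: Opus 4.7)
The plan is to exploit the standing convention that subscripts of $C$ are read modulo $t$, so that the three collections of vertices $\{u_{s-jl}\}_{j\geq 0}$, $\{u_{s+il}\}_{i\geq 0}$, $\{u_{s+id}\}_{i\geq 0}$ are parametrised by certain subsets of $\mathbb{Z}/t\mathbb{Z}$. Since the translation $m\mapsto s+m$ is a bijection of $\mathbb{Z}/t\mathbb{Z}$, the lemma reduces to the purely arithmetic identity
\[
\{-jl \bmod t : j\geq 0\}=\{il \bmod t : i\geq 0\}=\{id \bmod t : i\geq 0\},
\]
which is a basic fact about the cyclic subgroup generated by $l$ in $\mathbb{Z}/t\mathbb{Z}$.

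For the first equality I would use that $-l \equiv (t-1)l \pmod{t}$; hence $-jl \equiv j(t-1)l \pmod{t}$ is a non-negative multiple of $l$ modulo $t$, which gives one inclusion. For the reverse inclusion one just notes that the non-negative multiples of $l$ modulo $t$ already form a finite set of size $t/d$ (the order of $l$ in $\mathbb{Z}/t\mathbb{Z}$), and $il \equiv -(-i)l \pmod t$ can be rewritten as $-j l$ with $j\geq 0$ by taking $j$ in the range $[0,t/d)$ congruent to $-i$ modulo $t/d$.

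For the second equality I would apply Bézout's identity: write $d = \alpha t + \beta l$ for some integers $\alpha,\beta$, so that $d\equiv \beta l \pmod t$. Since adding a multiple of $t/d$ to $\beta$ does not change $\beta l$ modulo $t$, we may assume $\beta\geq 0$, and therefore $d$ (and, by iteration, every $id$) lies in $\{il\bmod t : i\geq 0\}$. The opposite inclusion is immediate from $l=(l/d)\,d$, which shows that every multiple of $l$ is a multiple of $d$.

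I do not expect any real obstacle beyond being careful with the non-negativity requirement on the indices $i$ and $j$ (which is handled by passing to representatives modulo $t/d$, the order of $l$ in $\mathbb{Z}/t\mathbb{Z}$). With those two arithmetic identities in hand, reinserting the translation by $s$ and re-indexing the vertices of $C$ yields the three displayed sets of vertices, completing the proof.
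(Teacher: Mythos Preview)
Your argument is correct and, at the structural level, matches the paper's: both reduce the question to residues modulo $t$ and then handle the two equalities separately. The details differ somewhat. For $\{u_{s-jl}\}=\{u_{s+il}\}$ the paper builds an explicit bijection via the Euclidean division $j=pL+q$ with $L=\mathrm{lcm}(t,l)/l=t/d$ and sets $i=L-q$ (and symmetrically), whereas you obtain one inclusion from $-l\equiv (t-1)l$ and the other by choosing a representative of $-i$ in $[0,t/d)$; these are the same idea packaged differently. For $\{u_{s+il}\}=\{u_{s+id}\}$ the paper shows the inclusion $\{u_{s+il}\}\subset\{u_{s+id}\}$ from $l=hd$ and then argues that both sets have exactly $t/d$ elements, while you prove the reverse inclusion directly via B\'ezout, writing $d\equiv \beta l\pmod t$ with $\beta\geq 0$. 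Your route is slightly more algebraic (it names the cyclic subgroup generated by $l$ in $\mathbb{Z}/t\mathbb{Z}$ and appeals to B\'ezout), the paper's slightly more combinatorial (it counts); neither gains or loses anything substantive here.
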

\begin{proof}
We first prove that $\{u_{s-jl}\}_{j\geq 0}= \{u_{s+il}\}_{i\geq 0}$.
Let $L=\frac{{\rm lcm}(t,~l)}{l}$ and consider $u_{s-jl}$ for some $j\geq 0$. By Euclidean algorithm there exist non-negative integers $p$ and $q$, such that $j=pL+q$, where $0\leq q < L$. Let $i=L-q\geq 0$. Then $s-jl= s-(pL+q)l \equiv s-ql \equiv s+(L-q)l = s+il \pmod{t}$ and thus $u_{s-jl}=u_{s+il}$. 
Then $\{u_{s-jl}\}_{j\geq 0}\subseteq \{u_{s+il}\}_{i\geq 0}$.

Arguing in a similar way, if we take $u_{s+i'l}$ for some $i'\geq 0$, there are non-negative integers $p'$ and $q'$, such that $i'=p'L+q'$, where $0\leq q' < L$. Let $j'=L-q'\geq 0$. Then $s+i'l= s+(p'L+q')l \equiv s + q'l \equiv s-(L-q')l=s-j'l \pmod{t}$ and thus $u_{s+i'l}=u_{s-j'l}$. 
Then, $\{u_{s-jl}\}_{j\geq 0} \supseteq \{u_{s+il}\}_{i\geq 0}$ and we have the equality.

Now we prove that $\{u_{s+il}\}_{i\geq 0}=\{u_{s+id}\}_{i\geq 0}$. 
Since $d=\gcd(t$, $l)$ we have that $l=hd$ for some $h\geq 1$, and thus we have that $u_{s+il}=u_{s+ihd}$, for each $i\geq 0$. Hence, \begin{math}\{u_{s+il}\}_{i\geq 0}\subset \{u_{s+id}\}_{i\geq 0}\end{math}. We will prove, that both sets have \begin{math}L=\frac{{\rm lcm}(t,~l)}{l}=\frac{tl}{dl}=\frac{t}{d}\end{math} elements.

As $L=\frac{t}{d}$, it follows that $\{u_{s+il}\}_{i\geq 0}=\{u_{s+il}\}_{i=0}^{L-1}$, as $s$, $s+l$, \ldots, $s+(L-1)l$ are different subscripts modulo $t$, 
$s+Ll\equiv s+{\rm lcm}(t,l)\equiv s \pmod{t}$ and all subscripts after $s+Ll$ are also repeated. This is, $\vert\{u_{s+il}\}_{i\geq 0}\vert=L$. Moreover, $\{u_{s+id}\}_{i\geq 0}=\{u_{s+id}\}_{i= 0}^{L-1}$ as $s$, $s+d$, \ldots, $s+(L-1)d$ are different subscripts modulo $t$, $s+Ld=s+t\equiv s \pmod{t}$ and all subscripts after $s+Ld$ are also repeated. Hence,  $\vert\{u_{s+id}\}_{i\geq 0}\vert=L$.
Therefore, $\{u_{s+il}\}_{i\geq 0}= \{u_{s+id}\}_{i\geq 0}$. 
\end{proof}

\begin{remark} 
\label{remark properties 1}
Let $D_1$ and $D_2$ be two vertex disjoint digraphs with Hamiltonian cycles, $C_1=(x_0$, $x_1$, \ldots, $x_{n-1}$, $x_0)$ and $C_2=(y_0$, $y_1$, \ldots, $y_{m-1}$, $y_0)$, respectively, and let $D$ a digraph in $D_1 \oplus D_2$. 
Let $l$ be a fixed integer in $[3$, $ n+1]$ such that $D$ has no cycle of length $l$ and $d=\gcd(n$, $ l-2)$. The following two assertions hold:
\begin{enumerate}[(a)]
\item If $(x_s$, $y_r)\in A(D)$, then $\{(x_{s+k+i(l-2)}$, $y_{r+k})\}_{i\geq 0}\subset A(D)$ for each $k\geq 0$ and $\{(x_{s+k+i(l-2)}$, $y_{r+k})\}_{i\geq 0}=\{(x_{s+k+id}$, $y_{r+k})\}_{i\geq 0}$ for each $k\geq 0$. 

\item If $(y_{r'}$, $x_{s'})\in A(D)$, then $\{(y_{r'+k}$, $x_{s'+k+i(l-2)})\}_{i\geq 0}\subset A(D)$ for each $k\geq 0$ and $\{(y_{r'+k}$, $x_{s'+k+i(l-2)})\}_{i\geq 0}=\{(y_{r'+k}$, $x_{s'+k+id}\}_{i\geq 0}$ for each $k\geq 0$. 
\end{enumerate}
\end{remark}
\begin{proof} 
We will prove the result in three steps.

\begin{claim}
\begin{enumerate}[{Case }1:] 
\item If $(x_s$, $y_r)\in A(D)$, then $\{(x_{s+i(l-2)}$, $y_{r})\}_{i\geq 0}\subset A(D)$.
\item If $(y_{r'}$, $x_{s'})\in A(D)$, then $\{(y_{r'}$, $x_{s'+i(l-2)})\}_{i\geq 0}\subset A(D)$.
\end{enumerate}
\label{remark1_assertion1}
\end{claim}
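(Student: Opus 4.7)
The plan is to prove both cases by contradiction plus induction: in each case I will assume the conclusion fails at the first step and extract a cycle of length $l$ in $D$, contradicting the standing hypothesis that $D$ has no $l$-cycle. Iteration then yields the full family of arcs, and Lemma \ref{lemma subscripts} is used to identify the ``forward'' and ``backward'' index sets.

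For Case 1, starting from $(x_s,y_r)\in A(D)$, I intend to prove the one-step improvement $(x_{s-(l-2)},y_r)\in A(D)$. The key geometric observation is that the subpath of $C_1$ from $x_{s-(l-2)}$ to $x_s$, namely
$$x_{s-(l-2)}\to x_{s-(l-2)+1}\to\cdots\to x_s,$$
has exactly $l-2$ arcs. If $(x_{s-(l-2)},y_r)\notin A(D)$, the definition of generalized sum forces $(y_r,x_{s-(l-2)})\in A(D)$, and concatenating this exterior arc with the $C_1$-subpath above and the given arc $x_s\to y_r$ produces the cycle
$$y_r\to x_{s-(l-2)}\to x_{s-(l-2)+1}\to\cdots\to x_s\to y_r$$
of length $1+(l-2)+1=l$, contradicting the assumption. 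Hence $(x_{s-(l-2)},y_r)\in A(D)$; applying the same step at $s-(l-2)$, and iterating, I get $(x_{s-i(l-2)},y_r)\in A(D)$ for every $i\ge 0$. Finally, Lemma \ref{lemma subscripts} (with $C=C_1$, $t=n$) tells me that $\{x_{s-i(l-2)}\}_{i\ge 0}=\{x_{s+i(l-2)}\}_{i\ge 0}=\{x_{s+id}\}_{i\ge 0}$, which is exactly the desired conclusion.

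Case 2 is handled symmetrically, but now the natural contradiction shifts the index in the opposite direction. Starting from $(y_{r'},x_{s'})\in A(D)$, if $(y_{r'},x_{s'+(l-2)})\notin A(D)$ then $(x_{s'+(l-2)},y_{r'})\in A(D)$, and the cycle
$$y_{r'}\to x_{s'}\to x_{s'+1}\to\cdots\to x_{s'+(l-2)}\to y_{r'}$$
has length $l$, again a contradiction. Iterating gives $(y_{r'},x_{s'+i(l-2)})\in A(D)$ for all $i\ge 0$.

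The argument is short and essentially combinatorial; there is no real obstacle, only a bookkeeping subtlety. Namely, the one-step contradiction moves the $x$-index backwards by $l-2$ in Case 1 but forwards by $l-2$ in Case 2, so the two cases are not literally parallel on the nose. The reason this does not matter is precisely the previous lemma: Lemma \ref{lemma subscripts} identifies $\{x_{s\pm i(l-2)}\}_{i\ge 0}$ with the $d$-spaced orbit of $x_s$ on $C_1$, so the direction in which the induction proceeds is immaterial to the statement.
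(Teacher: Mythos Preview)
Your proof is correct and follows essentially the same approach as the paper: in Case~1 you induct by shifting the $x$-index backward by $l-2$ and then invoke Lemma~\ref{lemma subscripts} to pass to the forward-indexed set, while in Case~2 you shift forward directly, exactly as the paper does. Your explicit remark about the asymmetry between the two cases and why Lemma~\ref{lemma subscripts} absorbs it is a nice touch that the paper leaves implicit.
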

\begin{proof}[of claim \ref{remark1_assertion1}]
\begin{enumerate}[(a)]
\item Suppose that $(x_s$, $y_r)\in A(D)$. 
We will prove that $(x_{s-j(l-2)}$, $y_{r})\in A(D)$ for each $j\geq 0$ by induction on $j$; and then we will see that $\{(x_{s-j(l-2)}$, $y_{r})\}_{j\geq 0}=\{(x_{s+i(l-2)}$, $y_{r})\}_{i\geq 0}$.

Since $(x_s$, $y_r)\in A(D)$, the assertion is true for $j=0$.
By the inductive hypothesis we can assume that $(x_{s-j(l-2)}$, $y_r)\in A(D)$. Now we will prove that $(x_{s-(j+1)(l-2)}$, $y_r)\in A(D)$. Indeed, $(x_{s-(j+1)(l-2)}$, $y_r)\in A(D)$, otherwise $(y_r$, $x_{s-(j+1)(l-2)})\in A(D)$ and thus $(x_{s-(j+1)(l-2)}$, $C_1$, $x_{s-j(l-2)})$ $\cup$ $(x_{s-j(l-2)}$, $y_r$, $x_{s-(j+1)(l-2)})$ is a cycle of length $l$ in $D$, which is impossible.
We conclude that $(x_{s-j(l-2)}$, $y_{r})\in A(D)$ for each $j\geq 0$. 

Observe that  $\{x_{s-j(l-2)}\}_{j\geq 0}= \{x_{s+i(l-2)}\}_{i\geq 0}$, by Lemma \ref{lemma subscripts}, and thus $\{(x_{s-j(l-2)}$, $y_{r})\}_{j\geq 0}= \{(x_{s+i(l-2)}$, $y_{r})\}_{i\geq 0}\subset A(D)$.  


\item Suppose  $(y_{r'},\ x_{s'})\in A(D)$. We will prove that $(y_{r'},\ x_{s'+i(l-2)})\in A(D)$ for each $i\geq 0$ by induction on $i$.

\begin{sloppypar}
Since $(y_{r'}$, $x_{s'})\in A(D)$, the assertion is true for $i=0$.
By the inductive hypothesis we can assume that $(y_{r'}$, $x_{s'+i(l-2)})\in A(D)$. Now we will prove that $(y_{r'}$, $x_{s'+(i+1)(l-2)})\in A(D)$. Indeed, $(y_{r'}$, $x_{s'+(i+1)(l-2)})\in A(D)$, otherwise $(x_{s'+(i+1)(l-2)}$, $y_{r'})\in A(D)$ and thus $(x_{s'+i(l-2)}$, $C_1$, $x_{s'+(i+1)(l-2)})$ $\cup$ $(x_{s'+(i+1)(l-2)}$, $y_{r'}$, $x_{s'+i(l-2)})$ is a cycle of length $l$ in $D$, which is impossible.
We conclude that $(y_{r'}$, $x_{s'+i(l-2)})\in A(D)$ for each $i\geq 0$.
\end{sloppypar}
\end{enumerate}
\end{proof}

\begin{claim} 
\begin{enumerate}[(a)]
\item  If $(x_s$, $y_r)\in A(D)$, then $\{(x_{s+k+i(l-2)}$, $y_{r+k})\}_{i\geq 0}\subset A(D)$ for each $k\geq 0$.
\item  If $(y_{r'}$, $x_{s'})\in A(D)$, then $\{(y_{r'+k}$, $x_{s'+k+i(l-2)})\}_{i\geq 0}\subset A(D)$ for each $k\geq 0$.
\end{enumerate}
\label{remark1_assertion2}
\end{claim}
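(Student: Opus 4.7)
The plan is to prove both parts of Claim \ref{remark1_assertion2} by induction on $k$, taking Claim \ref{remark1_assertion1} as the base case $k=0$. The inductive step in each part relies on the standing hypothesis that $D$ has no cycle of length $l$: whenever the desired arc is assumed to be missing, the reverse arc must lie in $A(D)$ (since between the summands there is exactly one arc per pair), and combining it with arcs supplied by the inductive hypothesis plus short segments of $C_1$ and $C_2$ will produce a directed cycle of length exactly $l$, a contradiction.

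For part (a), fix $k\geq 0$ and assume $(x_{s+k+i(l-2)},y_{r+k})\in A(D)$ for every $i\geq 0$. Fix $i$ and suppose for contradiction that $(x_{s+k+1+i(l-2)},y_{r+k+1})\notin A(D)$; then $(y_{r+k+1},x_{s+k+1+i(l-2)})\in A(D)$. Applying the inductive hypothesis with the index $i+1$ supplies $(x_{s+k+(i+1)(l-2)},y_{r+k})\in A(D)$. The key observation is the identity $(s+k+(i+1)(l-2))-(s+k+1+i(l-2))=l-3$, combined with $0\leq l-3\leq n-2$ (from $l\in[3,n+1]$), which guarantees that the directed path along $C_1$ from $x_{s+k+1+i(l-2)}$ forward to $x_{s+k+(i+1)(l-2)}$ consists of exactly $l-3$ arcs and does not wrap around $C_1$. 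Concatenating this $C_1$-subpath with the arc $x_{s+k+(i+1)(l-2)}\to y_{r+k}$, the single $C_2$-arc $y_{r+k}\to y_{r+k+1}$, and the arc $y_{r+k+1}\to x_{s+k+1+i(l-2)}$, we obtain a directed cycle of length $(l-3)+1+1+1=l$ in $D$, the desired contradiction. This closes the induction for (a).

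Part (b) is handled analogously by induction on $k$, with Case~2 of Claim~\ref{remark1_assertion1} as the base case. Under the same contradiction pattern---assuming the IH arcs $(y_{r'+k},x_{s'+k+j(l-2)})\in A(D)$ and supposing $(y_{r'+k+1},x_{s'+k+1+i(l-2)})\notin A(D)$ so that $(x_{s'+k+1+i(l-2)},y_{r'+k+1})\in A(D)$---one uses the IH arc with index $i+1$ together with the reversed arc and matching $C_1$- and $C_2$-segments to assemble a cycle of length $l$. Equivalently, once (a) is established for every $x\to y$ arc in $D$, part (b) follows by a complementation argument: the set of index pairs $(s,r)$ with $(x_s,y_r)\in A(D)$ is, by (a), closed under the group action on $\mathbb{Z}_n\times\mathbb{Z}_m$ generated by $(s,r)\mapsto(s+1,r+1)$ and $(s,r)\mapsto(s+(l-2),r)$; hence its complement, consisting of the pairs with $(y_r,x_s)\in A(D)$, is also preserved by this action, and this is precisely the statement of (b).

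I expect the main obstacle to be the bookkeeping: confirming that the forward $C_1$-distance in the constructed cycle is truly $l-3$, checking that this subpath is simple (no wrap-around) thanks to $l\leq n+1$, and, in the argument for (b), making sure the chosen IH index produces a cycle whose total length lands on $l$ rather than $l+n$ or similar. None of these checks is deep, but each requires care with the subscripts being understood modulo $n$ and $m$.
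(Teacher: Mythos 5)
Your part (a) is correct and is essentially the paper's argument (induction on $k$, with the missing arc forced to be reversed and then closed into a cycle of length exactly $l$ through a $C_1$-segment of $l-3$ arcs, the arc into $y_{r+k}$, the $C_2$-arc $y_{r+k}\to y_{r+k+1}$, and the reversed arc); your bookkeeping with the IH at index $i+1$ in place of the paper's vertex $x_{s+k-(l-3)}$ is an immaterial variation. However, your first route for part (b) --- ``handled analogously by induction on $k$ under the same contradiction pattern'' --- does not work. In the increasing-$k$ direction the only exterior arcs at your disposal are $y_{r'+k}\to x_{s'+k+j(l-2)}$ (from the IH) and $x_{s'+k+1+i(l-2)}\to y_{r'+k+1}$ (from the negation), i.e.\ an arc \emph{out of} $y_{r'+k}$ and an arc \emph{into} $y_{r'+k+1}$. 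Any cycle built from these two crossings must therefore traverse $C_2$ the long way from $y_{r'+k+1}$ back to $y_{r'+k}$, contributing $m-1$ arcs, and its total length comes out to $n+m+4-l$ (in any case at least $m+1$), which need not equal $l$; indeed $l\le n+1$ may be smaller than $m+1$. This directional obstruction is precisely why the paper proves (b) by inducting \emph{downward}, establishing $\{(y_{r'-k'},x_{s'-k'+i(l-2)})\}_{i\ge0}\subset A(D)$ (there the single $C_2$-arc $y_{r'-k'-1}\to y_{r'-k'}$ can be used and the cycle length is exactly $l$), and then needs a separate argument (its Claim on $F=F'$) converting the negative shifts into the positive ones via periodicity modulo ${\rm lcm}(n,m)$.

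That said, the ``equivalently'' argument you append for (b) is correct and rescues the proof, and it is genuinely different from (and cleaner than) the paper's route. Let $S=\{(s,r)\in\mathbb{Z}_n\times\mathbb{Z}_m \colon (x_s,y_r)\in A(D)\}$. Part (a) says $S$ is closed under the translations $T\colon(s,r)\mapsto(s+1,r+1)$ and $U\colon(s,r)\mapsto(s+(l-2),r)$; since $T$ and $U$ have finite order, closure under their non-negative iterates upgrades to full invariance ($T^{-1}$ and $U^{-1}$ are positive powers of $T$ and $U$), so $S$ is a union of cosets of the subgroup they generate, and hence so is its complement, which by condition (iii) of the generalized sum is exactly $\{(s,r)\colon (y_r,x_s)\in A(D)\}$; invariance of the complement under non-negative iterates of $T$ and $U$ is precisely statement (b). You should state this finiteness step explicitly (it is what lets you pass from the monoid generated by $T,U$ to the group), but with that spelled out this replaces both the downward induction and the paper's ${\rm lcm}(n,m)$ reindexing, at the cost of making (b) depend on (a) rather than being proved symmetrically.
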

\begin{proof}[of claim \ref{remark1_assertion2}]
\begin{enumerate}[(a)]
\item Suppose $(x_s$, $y_r)\in A(D)$. We prove that $\{(x_{s+k+i(l-2)}$, $y_{r+k})\}_{i\geq 0}\subseteq A(D)$ for each $k\geq 0$, by induction on $k$.

Since $(x_s$, $y_r)\in A(D)$, for $k=0$, it follows from the Assertion \ref{remark1_assertion1} (a) that $\{(x_{s+k+i(l-2)}$, $y_{r+k})\}_{i\geq 0}=\{(x_{s+i(l-2)}$, $y_{r})\}_{i\geq 0}\subseteq A(D)$.

%

By the inductive hypothesis we can assume that $\{(x_{s+k+i(l-2)}$, $y_{r+k})\}_{i\geq 0}\subset A(D)$. In particular, for $i=0$, we have that $(x_{s+k}$, $y_{r+k})\in A(D)$. Now we will prove that $\{(x_{s+k+1+i(l-2)}$, $y_{r+k+1})\}_{i\geq 0}\subset A(D)$.

Notice that $(x_{s+k-(l-3)}$, $y_{r+k+1})\in A(D)$, otherwise $(y_{r+k+1}$, $x_{s+k-(l-3)})\in A(D)$ and in this way $(x_{s+k-(l-3)}$, $C_1$, $x_{s+k})\cup (x_{s+k}$, $y_{r+k}$, $y_{r+k+1}$, $x_{s+k-(l-3)})$ is a cycle of length $l$ in $D$, which is impossible.

Since $(x_{s+k-(l-3)}$, $y_{r+k+1})\in A(D)$, we have that $(x_{s+k-(l-3)+i(l-2)}$, $y_{r+k+1})\in A(D)$ for each $i\geq 0$, by Assertion \ref{remark1_assertion1} (a). In particular, for $i=1$, $(x_{s+k-(l-3)+(l-2)}$, $y_{r+k+1})=(x_{s+k+1}$, $y_{r+k+1})\in A(D)$. Again, by Assertion \ref{remark1_assertion1} (a), we obtain that $\{(x_{s+k+1+i(l-2)}$, $y_{r+k+1})\}_{i\geq 0}\subset A(D)$.

Therefore, $\{(x_{s+k+i(l-2)}$, $y_{r+k})\}_{i\geq 0}\subset A(D)$ for each $k\geq 0$.

\item Assume $(y_{r'}$, $x_{s'})\in A(D)$.
To prove that  $\{(y_{r'+k}$, $x_{s'+k+i(l-2)})\}_{i\geq 0} \subseteq A(D)$ for each $k\geq 0$, we first prove that $\{(y_{r'-k'}$, $x_{s'-k'+i(l-2)})\}_{i\geq 0}\subseteq A(D)$ for each $k'\geq 0$ by induction on $k'$.

If $k'=0$, then $\{(y_{r'-k'}$, $x_{s'-k'+i(l-2)})\}_{i\geq 0}=\{(y_{r'}$, $x_{s'+i(l-2)})\}_{i\geq 0}\subseteq A(D)$, by Assertion \ref{remark1_assertion1} (b). 

By the inductive hypothesis we can assume that $\{(y_{r'-k'}$, $x_{s'-k'+i(l-2)})\}_{i\geq 0}\subset A(D)$. Consider $i=\frac{{\rm lcm}(n,\ l-2)}{l-2}-1\geq 0$, then 
$(y_{r'-k'}$, $x_{s'-k'+i(l-2)})=(y_{r'-k'}$, $x_{s'-k'-(l-2)})\in A(D)$, as \begin{math}i(l-2)=\left(\frac{{\rm lcm}(n,~l-2)}{l-2}-1\right)(l-2)={\rm lcm}(n,~l-2)-(l-2)\equiv -(l-2) \pmod{n}\end{math}.

Observe that $(y_{r'-k'-1}$, $x_{s'-k'-(l-2)+(l-3)})=(y_{r'-k'-1}$, $x_{s'-k'-1})\in A(D)$, otherwise $(x_{s'-k'-1}$, $y_{r'-k'-1})\in A(D)$ and thus $(x_{s'-k'-(l-2)}$, $C_1$, $x_{s'-k'-1})$ $\cup$ $(x_{s'-k'-1}$, $y_{r'-k'-1}$, $y_{r'-k'}$, $x_{s-k'-(l-2)})$ is a cycle of length $l$ in $D$, which is impossible. We conclude that $(y_{r'-k'-1}$, $x_{s'-k'-1})=(y_{r'-(k'+1)}$, $x_{s'-(k'+1)})\in A(D)$.
By Assertion \ref{remark1_assertion1} (b), $(y_{r'-(k'+1)}$, $x_{s'-(k'+1)+i(l-2)})\in A(D)$ for each $i\geq 0$. 

Therefore, $\{(y_{r'-k'}$, $x_{s'-k'+i(l-2)})\}_{i\geq 0}\subset A(D)$ for each $k'\geq 0$.

Now we will prove, for each $k\geq 0$, that there exists $k'\geq 0$ such that  $\{(y_{r'+k}$, $x_{s'+k+i(l-2)})\}_{i\geq 0} = \{(y_{r'-k'}$, $x_{s'-k'+j(l-2)})\}_{j\geq 0}$.

Let $L={\rm lcm}(n$, $m)$. By Euclidean algorithm there exist non-negative integers $p$ and $q$ such that $k=pL+q$ where $0\leq q < L$. Define $k'=L-q>0$ and let $F=\{(y_{r'+k}$, $x_{s'+k+i(l-2))})\}_{i\geq 0}$ and $F'= \{(y_{r'-k'}$, $x_{s'-k'+j(l-2))})\}_{j\geq 0}$.

\vspace{2mm}
\begin{claim}
$F=F'$.
\label{claim F=F'}
\end{claim} 
\begin{proof}[of claim  \ref{claim F=F'}]
Let $(y_{r'+k}$, $x_{s'+k+i(l-2)})\in F$ for some $i\geq 0$. Since $k=pL+q$, we have that $(y_{r'+k}$, $x_{s'+k+i(l-2)})=(y_{r'+(pL+q)}$, $x_{s'+(pL+q)+i(l-2)})$.
Observe that $r'+k=r'+pL+q\equiv r'+q \equiv r'-L+q=r'-(L-q)=r'-k' \pmod{m}$ as $m$ divides $L$ and $s'+k+i(l-2)=s'+pL+q+i(l-2)\equiv s'+q+i(l-2)\equiv s'-L+q+i(l-2)=s'-(L-q)+i(l-2)=s'-k'+i(l-2) \pmod{n}$ as $n$ divides $L$. Hence, $(y_{r'+k}$, $x_{s'+k+i(l-2)})=(y_{r'-k'}$, $x_{s'-k'+i(l-2)})\in F'$ and thus $F \subset F'$.

Let $(y_{r'-k'}$, $x_{s'-k'+j(l-2)})\in F'$ for some $j\geq 0$. As $k'=L-q$, we have that $(y_{r'-k'}$, $x_{s'-k'+j(l-2)})=(y_{r'-(L-q)}$, $x_{s'-(L-q)+j(l-2)})$.
Observe that $r'-k'=r'-(L-q)\equiv r'+q \equiv r'+pL+q=r'+k \pmod{m}$ as $m$ divides $L$ and $s'-k'+j(l-2)=s'-(L-q)+j(l-2)\equiv s'+q +j(l-2) \equiv s'+pL+q +j(l-2)=s'+k +j(l-2)\pmod{n}$ as $n$ divides $L$. Hence, $(y_{r'-k'}$, $x_{s'-k'+j(l-2)})=(y_{r'+k}$, $x_{s'+k+j(l-2)})\in F$ and thus $F' \subset F$.
\end{proof}
Therefore, $\{(y_{r'+k}$, $x_{s'+k+i(l-2))})\}_{i\geq 0}\subset A(D)$ for each $k\geq 0$.
\end{enumerate}
\end{proof}

\begin{claim} 
\label{remark1_assertion3}
\begin{enumerate}[(a)]
\item If $(x_s$, $y_r)\in A(D)$, then $\{(x_{s+k+id}$, $y_{r+k})\}_{i\geq 0}\subset A(D)$ and $\{(x_{s+k+id}$, $y_{r+k})\}_{i\geq 0}= \{(x_{s+k+i(l-2)}$, $y_{r+k})\}_{i\geq 0}$ for each $k\geq 0$. 
\item If $(y_{r'}$, $x_{s'})\in A(D)$, then $\{( y_{r'+k}$, $x_{s'+k+id})\}_{i\geq 0}\subset A(D)$ and $\{(y_{r'+k}$, $x_{s'+k+id})\}_{i\geq 0}= \{( y_{r'+k}$, $x_{s'+k+i(l-2)})\}_{i\geq 0}$ for each $k\geq 0$.
\end{enumerate}

\end{claim}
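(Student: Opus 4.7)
The plan is to observe that Claim \ref{remark1_assertion3} is a direct corollary of Claim \ref{remark1_assertion2} combined with Lemma \ref{lemma subscripts}. All the substantive arc-existence work is already done in Claim \ref{remark1_assertion2}; what remains is a purely index-theoretic identification of the sets $\{x_{s+k+i(l-2)}\}_{i\geq 0}$ and $\{x_{s+k+id}\}_{i\geq 0}$ (and their analogues on $C_1$ for part (b)).

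For part (a), I would fix $k\geq 0$ and apply Lemma \ref{lemma subscripts} to the Hamiltonian cycle $C_1=(x_0,x_1,\ldots,x_{n-1},x_0)$ with parameters $t=n$, the ``$l$'' of the lemma replaced by $l-2$, and the ``$s$'' of the lemma replaced by $s+k$. Since by hypothesis $d=\gcd(n,\,l-2)$, Lemma \ref{lemma subscripts} gives the vertex-set equality
\[
\{x_{s+k+i(l-2)}\}_{i\geq 0}=\{x_{s+k+id}\}_{i\geq 0}.
\]
Because the second coordinate $y_{r+k}$ is constant in $i$ throughout both families of arcs, this vertex-set equality immediately upgrades to the arc-set equality
\[
\{(x_{s+k+i(l-2)},\,y_{r+k})\}_{i\geq 0}=\{(x_{s+k+id},\,y_{r+k})\}_{i\geq 0}.
\]
The containment $\{(x_{s+k+id},y_{r+k})\}_{i\geq 0}\subset A(D)$ then follows by substituting this equality into the containment furnished by Claim \ref{remark1_assertion2}(a).

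Part (b) is handled symmetrically: the arcs under consideration have the fixed tail $y_{r'+k}$ and varying head $x_{s'+k+i(l-2)}$ lying on the cycle $C_1$, so the same application of Lemma \ref{lemma subscripts} with $t=n$ and ``$l$'' replaced by $l-2$ (where again $d=\gcd(n,l-2)$) yields $\{x_{s'+k+i(l-2)}\}_{i\geq 0}=\{x_{s'+k+id}\}_{i\geq 0}$; combined with Claim \ref{remark1_assertion2}(b), this delivers both the claimed equality and the claimed containment.

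There is no real obstacle in this argument; the only point meriting care is the bookkeeping that Lemma \ref{lemma subscripts} should be invoked on the cycle $C_1$ with modulus $n$ (since $d=\gcd(n,l-2)$ is defined from $n$ alone) in \emph{both} parts (a) and (b) — in particular, for part (b) the varying subscript is on an $x$-vertex, not on a $y$-vertex, so $C_1$ is still the relevant cycle.
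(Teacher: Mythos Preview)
Your proposal is correct and follows essentially the same approach as the paper's own proof: invoke Claim \ref{remark1_assertion2} for the containment $\{(x_{s+k+i(l-2)},y_{r+k})\}_{i\geq 0}\subset A(D)$, then apply Lemma \ref{lemma subscripts} on $C_1$ (with modulus $n$ and step $l-2$, so that $d=\gcd(n,l-2)$) to identify the vertex sets $\{x_{s+k+i(l-2)}\}_{i\geq 0}=\{x_{s+k+id}\}_{i\geq 0}$, and observe that since the $y$-coordinate is fixed this lifts to the desired arc-set equality. Your explicit remark that in part (b) the varying subscript is still on an $x$-vertex (so Lemma \ref{lemma subscripts} is again applied to $C_1$) is a useful clarification that the paper leaves implicit.
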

\begin{proof}[of claim \ref{remark1_assertion3}]
\begin{enumerate}[(a)]
\item Assume that $(x_s$, $y_r)\in A(D)$. Then $\{(x_{s+k+i(l-2)}$, $ y_{r+k})\}_{i\geq 0}\subseteq A(D)$ for each $k\geq 0$, by Assertion \ref{remark1_assertion2} (a).

Take a fixed $k\geq 0$. By Lemma \ref{lemma subscripts}, $\{x_{s+k+id}\}_{i\geq 0}=\{x_{s+k+i(l-2)}\}_{i\geq 0}$. Hence, $\{(x_{s+k+id}$, $y_{r+k})\}_{i\geq 0}=\{(x_{s+k+i(l-2)}$, $y_{r+k})\}_{i\geq 0}$.

\item In a similar way, it can be proved that if $(y_{r'}$, $x_{s'})\in A(D)$, then 
$\{(y_{r'+k}$, $x_{s'+k+id})\}_{i\geq 0}= \{( y_{r'+k}$, $x_{s'+k+i(l-2)})\}_{i\geq 0} \subset A(D)$ for each $k\geq 0$. 
\end{enumerate}
\end{proof}

From the three claims we have the result.
\end{proof}

Observe that we might exchange the roles of $C_1$ and $C_2$ in Remark \ref{remark properties 1}, asking for $l$ to be a fixed integer in $[3$, $ m+1]$. As a consequence of this, we obtain the following remark: 

\begin{remark}
\label{remark properties 4}
Let $D_1$ and $D_2$ be two digraphs with Hamiltonian cycles, $C_1=(x_0$, $x_1$, \ldots, $x_{n-1}$, $x_0)$ and $C_2=(y_0$, $y_1$, \ldots, $y_{m-1}$, $y_0)$, respectively, and let $D$ be a digraph in $D_1 \oplus D_2$. 
Let $l$ be a fixed integer in $[3$, $ m+1]$ such that $D$ has no cycle of length $l$, and $d=\gcd(m$, $ l-2)$. Then the following two assertions hold:
\begin{enumerate}[(a)]
\item \begin{sloppypar} If $(x_s$, $ y_r)\in A(D)$, then $\{(x_{s+k}$, $y_{r+k+i(l-2)})\}_{i\geq 0}\subset A(D)$ for each $k\geq 0$ and $\{(x_{s+k}$, $y_{r+k+i(l-2)})\}_{i\geq 0}= \{(x_{s+k}$, $y_{r+k+id})\}_{i\geq 0}$ for each $k\geq 0$. \end{sloppypar}

\item If $(y_{r'}$, $ x_{s'})\in A(D)$, then $\{(y_{r'+k+i(l-2)}$, $x_{s'+k})\}_{i\geq 0}\subset A(D)$ for each $k\geq 0$ and $\{( y_{r'+k+i(l-2)}$, $x_{s'+k})\}_{i\geq 0} = \{(y_{r'+k+id}$, $x_{s'+k})\}_{i\geq 0}$ for each $k\geq 0$.
\end{enumerate}
\end{remark}

Now we will show the behavior of exterior arcs in a g.s. of two Hamiltonian digraphs when we forbid cycles of length $l$ for some $l\in [n+2$, $ n+m]$.

\begin{remark} 
\label{remark properties 1'}
Let $D_1$ and $D_2$ be two vertex disjoint digraphs with Hamiltonian cycles,  $C_1=(x_0$, $x_1$, \ldots, $x_{n-1}$, $x_0)$ and $C_2=(y_0$, $y_1$, \ldots, $y_{m-1}$, $y_0)$, respectively, and let $D$ be a digraph in $D_1 \oplus D_2$. Let $l$ be a fixed integer in $[n+2$, $ n+m]$ such that $D$ has no cycle of length $l$, $h=l-(n+1)$ and $d=\gcd(n,~m)$. Then the two following assertions hold:
\begin{enumerate}[(a)]
\item If $(x_s$, $y_r)\in A(D)$, then $\{(x_{s+i}$, $y_{r+ih})\}_{i\geq 0}\subset A(D)$ and $\{(x_{s+id}$, $y_{r})\}_{i\geq 0}\subset A(D)$.
\item \sloppy If $(y_{r'}$, $x_{s'})\in A(D)$, then $\{(y_{r'+ih}$, $x_{s'+i})\}_{i\geq 0}\subset A(D)$ and  $\{(y_{r'}$, $x_{s'+id})\}_{i\geq 0}\subset A(D)$.
\end{enumerate}
\end{remark}
\begin{proof}
We will prove the result in two steps.

\begin{claim}
\label{remark2_assertion1}
\begin{enumerate}[(a)] 
\item If $(x_s$, $y_r)\in A(D)$, then $\{(x_{s+i}$, $y_{r+ih})\}_{i\geq 0}\subset A(D)$.
\item If $(y_{r'}$, $x_{s'})\in A(D)$, then $\{(y_{r'+ih}$, $x_{s'+i})\}_{i\geq 0}\subset A(D)$.
\end{enumerate}
\end{claim}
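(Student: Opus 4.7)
The plan is to prove both (a) and (b) by induction on the appropriate step index, deriving a contradiction in each inductive step by constructing a cycle of length exactly $l=n+1+h$. In both cases the offending cycle uses all $n$ vertices of $C_1$ (contributing $n-1$ consecutive arcs of $C_1$), $h+1$ consecutive vertices of $C_2$ (contributing $h$ arcs of $C_2$), plus the inductive exterior arc and a ``reversed'' exterior arc coming from the contradiction hypothesis, for a total length $(n-1)+1+h+1=n+h+1=l$. Since $h\in[1,m-1]$, the $n+h+1$ vertices involved are pairwise distinct, so the closed walk is a genuine cycle.

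For part (a), I will induct forward on $i\ge 0$. The base case $i=0$ is the hypothesis. For the inductive step, assume $(x_{s+i},y_{r+ih})\in A(D)$ and suppose, for contradiction, that $(x_{s+i+1},y_{r+(i+1)h})\notin A(D)$, so that $(y_{r+(i+1)h},x_{s+i+1})\in A(D)$. Then
\[
x_{s+i+1}\to x_{s+i+2}\to\cdots\to x_{s+i}\to y_{r+ih}\to y_{r+ih+1}\to\cdots\to y_{r+(i+1)h}\to x_{s+i+1}
\]
is a cycle of length $l$, contradicting the choice of $l$. Hence $(x_{s+i+1},y_{r+(i+1)h})\in A(D)$.

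For part (b) the symmetric forward attempt fails: starting from $(y_{r'+ih},x_{s'+i})$ and the reversed arc $(x_{s'+i+1},y_{r'+(i+1)h})$, the only natural cycle one can build has length $1+(m-h)+1+1=m-h+3$, which is not equal to $l=n+h+1$ in general. This is the main obstacle. I will get around it by inducting \emph{backward} on $j\ge 0$, proving that $(y_{r'-jh},x_{s'-j})\in A(D)$. Assuming the statement for $j$ and that $(y_{r'-(j+1)h},x_{s'-(j+1)})\notin A(D)$, so that $(x_{s'-j-1},y_{r'-(j+1)h})\in A(D)$, the closed walk
\[
y_{r'-jh}\to x_{s'-j}\to x_{s'-j+1}\to\cdots\to x_{s'-j-1}\to y_{r'-(j+1)h}\to\cdots\to y_{r'-jh}
\]
is a cycle of length $1+(n-1)+1+h=l$, a contradiction.

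Finally, to recover the forward formulation claimed in (b), I will invoke a periodicity argument in the spirit of Lemma \ref{lemma subscripts}: the map $k\mapsto(y_{r'+kh\bmod m},\,x_{s'+k\bmod n})$ from $\mathbb{Z}$ to $V(C_2)\times V(C_1)$ has finite image of some period $N$, so for every $i\ge 0$ there exists $j\ge 0$ with $-j\equiv i\pmod N$, which gives $(y_{r'-jh},x_{s'-j})=(y_{r'+ih},x_{s'+i})$. Therefore the backward orbit coincides with the forward orbit, and the conclusion $\{(y_{r'+ih},x_{s'+i})\}_{i\ge 0}\subseteq A(D)$ follows.
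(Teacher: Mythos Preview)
Your proof is correct and follows essentially the same strategy as the paper's own argument: forward induction for (a), backward induction for (b), and then a periodicity argument to identify the backward orbit $\{(y_{r'-jh},x_{s'-j})\}_{j\ge 0}$ with the forward one $\{(y_{r'+ih},x_{s'+i})\}_{i\ge 0}$. The only cosmetic difference is that the paper makes the period explicit, taking $\mathcal{L}=\mathrm{lcm}\bigl(n,\tfrac{\mathrm{lcm}(m,h)}{h}\bigr)$ and verifying the two inclusions by hand, whereas you simply invoke the existence of some common period $N$; both are valid.
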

\begin{proof}[of claim \ref{remark2_assertion1}]
As $l\in [n+2$, $n+m]$, we have that $h=l-(n+1)\in [1$, $m-1]$.

\begin{enumerate}[(a)]
\item Suppose that $(x_s$, $y_r)\in A(D)$. 
We will prove that $(x_{s+i}$, $y_{r+ih})\in A(D)$ for each $i\geq 0$ by induction on $i$.

As $(x_s$, $y_r)\in A(D)$, the assertion is true for $i=0$.
Assume, by the inductive hypothesis, that $(x_{s+i}$, $y_{r+ih})\in A(D)$ and we will prove that $(x_{s+(i+1)}$, $y_{r+(i+1)h})\in A(D)$. Indeed, $(x_{s+(i+1)}$, $y_{r+(i+1)h})\in A(D)$, otherwise $(y_{r+(i+1)h}$, $x_{s+(i+1)})\in A(D)$ and thus $(x_{s+(i+1)}$, $C_1$, $x_{s+i})$ $\cup$ $(x_{s+i}$, $y_{r+ih})$ $\cup$ $(y_{r+ih}$, $C_2$, $y_{r+(i+1)h})$ $\cup$ $(y_{r+(i+1)h}$, $x_{s+(i+1)})$ is a cycle of length $(n-1)+1+h+1=l$ in $D$, which is impossible.
We conclude that $\mathcal{A}=\{(x_{s+i}$, $y_{r+ih})\}_{i\geq 0}\subset A(D)$. Notice that $(x_{s+(i+1)}$, $C_1$, $x_{s+i})$ is the directed path along the cycle $C_1$ obtained by deleting the arc $(x_{s+i}$, $x_{s+(i+1)})$. 

\item Suppose that $(y_{r'}$, $x_{s'})\in A(D)$.
To prove that $(y_{r'+ih}$, $x_{s'+i})\in A(D)$ for each $i\geq 0$, we will see that $(y_{r'-ih}$, $x_{s'-i})\in A(D)$ for each $i\geq 0$ by induction on $i$; and then we prove that $\{(y_{r'-jh}$, $x_{s'-j})\}_{j\geq 0}=\{(y_{r'+ih}$, $x_{s'+i})\}_{i\geq 0}$.

By our assumption $(y_{r'}$, $x_{s'})\in A(D)$, hence, the assertion is true for $i=0$. By the inductive hypothesis we can assume that $(y_{r'-ih}$, $x_{s'-i})\in A(D)$. 
Now we prove that $(y_{r'-(i+1)h}$, $x_{s'-(i+1)})\in A(D)$. Indeed, $(y_{r'-(i+1)h}$, $x_{s'-(i+1)})\in A(D)$, otherwise $(x_{s'-(i+1)}$, $y_{r'-(i+1)h})\in A(D)$ and thus $(x_{s'-i}$, $C_1$, $x_{s'-(i+1)})$ $\cup$ $(x_{s'-(i+1)}$, $y_{r'+(i+1)h})$ $\cup$ $(y_{r'-(i+1)h}$, $C_2$, $y_{r'-ih})$ $\cup$ $(y_{r'-ih}$, $x_{s'-i})$ is a cycle of length $(n-1)+1+h+1=l$ in $D$, which is impossible.
We conclude that $\{(y_{r'-ih}$, $x_{s'-i})\}_{i\geq 0}\subset A(D)$.
Notice that $(x_{s'-i}$, $C_1$, $x_{s'-(i+1)})$ is the directed path along the cycle $C_1$ obtained by deleting the arc $(x_{s-(i+1)}$, $x_{s-i})$.
\begin{claim} 
\label{remark2_claim}
$\{(y_{r'-jh}$, $x_{s'-j})\}_{j\geq 0}=\{(y_{r'+ih}$, $x_{s'+i})\}_{i\geq 0}$.
\end{claim}
\begin{proof}[of claim \ref{remark2_claim}]
Let $\mathcal{L}={\rm lcm}\left(n,~\frac{{\rm lcm}(m,~h)}{h}\right)$.
Take $(y_{r'-jh}$, $x_{s'-j})$ for some $j\geq 0$. By Euclidean algorithm there exist non-negative integers $p$ and $q$, such that $j=p\mathcal{L}+q$, where $0\leq q < \mathcal{L}$. Let $i=\mathcal{L}-q\geq 0$. Then:
\begin{enumerate}[(i)]
\item Since $n$ divides $\mathcal{L}$, we have that $s'-j = s'-(p\mathcal{L}+q) \equiv s'-q \equiv s'+(\mathcal{L}-q) = s'+i \pmod{n}$, and thus $x_{s'-j}=x_{s'+i}$; 
\item since $m$ divides $\mathcal{L}h$, we have that $r'-jh = r' -(p\mathcal{L}+q)h \equiv r' -qh \equiv r'+(\mathcal{L}-q)h=r'+ih \pmod{m}$, and thus $y_{r'-jh}=y_{r'+ih}$.
\end{enumerate}
Hence, $(y_{r'-jh}$, $x_{s'-j})=(y_{r'+ih}$, $x_{s'+i})\in \{(y_{r'+ih}$, $x_{s'+i})\}_{i\geq 0}$, and  $\{(y_{r'-jh}$, $x_{s'-j})\}_{j\geq 0}\subset \{(y_{r'+ih}$, $x_{s'+i})\}_{i\geq 0}$.

Arguing in a similar way, we can take an arc of the form $(y_{r'+i'h}$, $x_{s'+i'})$ for some $i'\geq 0$. Then, there are non-negative integers $p'$ and $q'$, such that $i'=p'\mathcal{L}+q'$, where $0\leq q' < \mathcal{L}$. Let $j'=\mathcal{L}-q'\geq 0$. Then:
\begin{enumerate}[(i)]
\item Since $n$ divides $\mathcal{L}$, we have that $s'+i'= s+(p'\mathcal{L}+q') \equiv s' + q' \equiv s'-(\mathcal{L}-q')=s-j' \pmod{n}$, and thus $x_{s'+i'}=x_{s'-j'}$;
\item since $m$ divides $\mathcal{L}h$,, we have that $r'+i'h = r' +(p'\mathcal{L}+q')h \equiv r'+q'h \equiv r'-(\mathcal{L}-q')h=r'-j'h \pmod{m}$, and thus $y_{r'+i'h}=y_{r'-j'h}$.
\end{enumerate}
Hence, \begin{math}(y_{r'+i'h},~x_{s'+i'})=(y_{r'-j'h},~x_{s'-j'})\in \{(y_{r'-jh},~x_{s'-j})\}_{j\geq 0}\end{math}, and $\{(y_{r'+ih}$, $x_{s'+i})\}_{i\geq 0}\subset \{(y_{r'-jh}$, $x_{s'-j})\}_{j\geq 0}$.

We conclude that \begin{math}\{(y_{r'-jh},~x_{s'-j})\}_{j\geq 0}=\{(y_{r'+ih},~x_{s'+i})\}_{i\geq 0}\subset A(D)\end{math}.
\end{proof}
\end{enumerate}
\end{proof}

\begin{claim} 
\label{remark2_assertion2}
If $(x_s$, $y_r)\in A(D)$, then $\{(x_{s+id}$, $y_{r})\}_{i\geq 0}\subset A(D)$; and if $(y_{r'}$, $x_{s'})\in A(D)$, then $\{(y_{r'}$, $x_{s'+id})\}_{i\geq 0}\subset A(D)$.
\end{claim}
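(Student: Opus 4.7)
The plan is to deduce Claim \ref{remark2_assertion2} directly from Claim \ref{remark2_assertion1} by specializing the free index so that the $y$-subscript returns to its starting value. For part (a), I would start from the hypothesis $(x_s, y_r) \in A(D)$ and invoke Claim \ref{remark2_assertion1} (a) to obtain $\{(x_{s+i}, y_{r+ih})\}_{i \geq 0} \subseteq A(D)$. Then I would set $i = km$ for each $k \geq 0$: since $kmh$ is a multiple of $m$, we have $y_{r + kmh} = y_r$, so $(x_{s+km}, y_r) \in A(D)$ for every $k \geq 0$. Finally, Lemma \ref{lemma subscripts} applied with $t = n$ and $l = m$ (so that $\gcd(t, l) = d$) identifies $\{x_{s+km}\}_{k \geq 0}$ with $\{x_{s+kd}\}_{k \geq 0}$, yielding $\{(x_{s+id}, y_r)\}_{i \geq 0} \subseteq A(D)$, as required.

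Part (b) would proceed symmetrically: starting from $(y_{r'}, x_{s'}) \in A(D)$, Claim \ref{remark2_assertion1} (b) yields $\{(y_{r'+ih}, x_{s'+i})\}_{i \geq 0} \subseteq A(D)$; setting $i = km$ forces $y_{r'+ih} = y_{r'}$, giving $(y_{r'}, x_{s'+km}) \in A(D)$ for every $k \geq 0$; and another application of Lemma \ref{lemma subscripts} (again with $t = n$, $l = m$) rewrites $\{x_{s'+km}\}_{k \geq 0}$ as $\{x_{s'+kd}\}_{k \geq 0}$, which furnishes $\{(y_{r'}, x_{s'+id})\}_{i \geq 0} \subseteq A(D)$.

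I do not expect any real obstacle here. Claim \ref{remark2_assertion1} already absorbs the forbidden-$l$-cycle argument, and what remains is a purely arithmetic observation: in the cyclic group of order $n$, the subgroup generated by $m$ equals the subgroup generated by $\gcd(n, m) = d$. The only subtlety is that the $x$- and $y$-subscripts live in different moduli ($n$ and $m$ respectively), so the specialization $i = km$ must simultaneously annihilate the $y$-shift modulo $m$ and move the $x$-index by a multiple of $d$ modulo $n$; both, however, are immediate from taking $m$ itself (or any of its multiples) as the stepping parameter, so no further invocation of the no-$l$-cycle hypothesis is needed at this stage.
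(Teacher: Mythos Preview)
Your proposal is correct and follows essentially the same route as the paper: invoke Claim~\ref{remark2_assertion1}, specialize the free index to multiples of $m$ so the $y$-coordinate returns to $y_r$ (respectively $y_{r'}$), and then apply Lemma~\ref{lemma subscripts} with $t=n$, $l=m$ to pass from steps of size $m$ to steps of size $d=\gcd(n,m)$. The paper's argument is identical in structure and in the lemmas it cites.
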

\begin{proof}[of claim \ref{remark2_assertion2}]
Suppose that $(x_s$, $y_r)\in A(D)$, then $\mathcal{A}=\{(x_{s+i}$, $y_{r+ih})\}_{i\geq 0}\subset A(D)$, by Assertion \ref{remark2_assertion1}. 

Consider the following subset of $\mathcal{A}$: $\{(x_{s+(im)}$, $y_{r+(im)h})\}_{i\geq 0}$. As $r+(im)h\equiv r \pmod{m}$ 
we have that $\{(x_{s+(im)}$, $y_{r+(im)h})\}_{i\geq 0}=\{(x_{s+im}$, $y_{r})\}_{i\geq 0}$.

Since $d=\gcd(n,~m)$, it follows from Lemma \ref{lemma subscripts} that $\{x_{s+im}\}_{i\geq 0}= \{x_{s+id}\}_{i\geq 0}$, and thus $\{(x_{s+im}$, $y_{r})\}_{i\geq 0}=\{(x_{s+id}$, $y_{r})\}_{i\geq 0}\subset A(D)$.

Similarly, whenever $(y_{r'}$, $x_{s'})\in A(D)$, 
it follows that $\{(y_{r'}$, $x_{s'+id})\}_{i\geq 0}\subset A(D)$.
%
%
\end{proof}

\end{proof}

Observe that we might exchange the roles of $C_1$ and $C_2$ in Remark \ref{remark properties 1'}, asking for $l$ to be a fixed integer in $[m+2$, $n+m]$ and $h=l-(m+1)$. Then, we have the following remark:

\begin{remark} 
\label{remark properties 3'}
Let $D_1$ and $D_2$ be two vertex disjoint digraphs with Hamiltonian cycles, $C_1=(x_0$, $x_1$, \ldots, $x_{n-1}$, $x_0)$ and $C_2=(y_0$, $y_1$, \ldots, $y_{m-1}$, $y_0)$, respectively, and let $D$ be a digraph in $D_1 \oplus D_2$. Let $d=\gcd(n,~m)$, $l$ be a fixed integer in $[m+2$, $ n+m]$ such that $D$ has no cycle of length $l$, and $h=l-(m+1)$.
\begin{enumerate}[(a)]
\item If $(x_s, y_r)\in A(D)$, then $\{(x_{s+ih}$, $y_{r+i})\}_{i\geq 0}\subset A(D)$ and $\{(x_{s}$, $y_{r+id})\}_{i\geq 0}\subset A(D)$. 
\item If $(y_{r'}, x_{s'})\in A(D)$, then $\{(y_{r'+i}$, $x_{s'+ih})\}_{i\geq 0}\subset A(D)$ and $\{(y_{r'+id}$, $x_{s'})\}_{i\geq 0}\subset A(D)$.
\end{enumerate}
\end{remark}

Given two Hamiltonian digraphs $D_1$ and $D_2$ of order $n$ and $m$, respectively, and a strong digraph $D$ in $D_1\oplus D_2$. In the present section, we will give sufficient conditions for the existence of cycles of length $l$ for certain $l\in[3$, $ n+m]$.

\begin{definition}
Let $D$ be a digraph, a vertex $v\in V(D)$ is \emph{in-singular} (\emph{out-singular}) with respect to a set of vertices $A\subset V(D)\setminus \{v\}$ if $A\mapsto v$ ($v\mapsto A$); and we will say that $v$ is \emph{singular} with respect to $A$, if it is either in-singular or out-singular with respect to $A$.

If $H$ is a subdigraph of $D$, we will simply say that $v$ is \emph{in-singular}, \emph{out-singular} or \emph{singular} with respect to $H$, whenever $v$ is, respectively, in-singular, out-singular or singular with respect to $V(H)$.
\end{definition}

In the following lemma we will see that, whenever $D$ is a strong digraph in the g.s. of two Hamiltonian digraphs, $D_1$ and $D_2$, and $D$ contains a singular vertex in $D_i$ with respect to $D_{3-i}$, for some $i\in\{1,2\}$, then $D$ is pancyclic. 

\begin{lemma} 
\label{lemma one singular vertex}
Let $D_1$ and $D_2$ be two Hamiltonian vertex disjoint digraphs   
 and let $D$ be a strongly connected digraph in $D_1 \oplus D_2$. If $D_i$ contains a singular vertex with respect to $D_{3-i}$ in $D$, for some $i\in \{1,2\}$, then $D$ is pancyclic.
\end{lemma}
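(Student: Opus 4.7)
My plan is to proceed by contradiction. Suppose $D$ is not pancyclic, so $D$ has no cycle of some length $l\in[3,n+m]$, where $n=|V(D_1)|$ and $m=|V(D_2)|$. I will show that the existence of a singular vertex then forces every exterior arc between $V(D_1)$ and $V(D_2)$ to point in the same direction, contradicting the strong connectivity of $D$.

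Without loss of generality, assume the singular vertex lies in $D_1$; call it $x_0$ and take it as the initial vertex of a Hamiltonian cycle $C_1=(x_0,x_1,\ldots,x_{n-1},x_0)$ of $D_1$, and let $C_2=(y_0,y_1,\ldots,y_{m-1},y_0)$ be a Hamiltonian cycle of $D_2$. I first treat the out-singular case $x_0\mapsto V(D_2)$, so that $(x_0,y_r)\in A(D)$ for every $r\in\{0,1,\ldots,m-1\}$, and I split on the range of $l$.

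If $l\in[3,n+1]$, apply Remark \ref{remark properties 1}(a) to each starting arc $(x_0,y_r)$; taking $i=0$ in its conclusion yields $(x_k,y_{r+k})\in A(D)$ for every $k\geq 0$. For an arbitrary target arc $(x_s,y_{r'})$ between the summands, read it as $(x_k,y_{r+k})$ with $k=s$ starting from the arc $(x_0,y_{r'-s})$ (subscript reduced modulo $m$), so $(x_s,y_{r'})\in A(D)$. Hence $V(D_1)\mapsto V(D_2)$ and no arc leaves $V(D_2)$ towards $V(D_1)$, contradicting strong connectivity. If instead $l\in[n+2,n+m]$, apply Remark \ref{remark properties 1'}(a) with $h=l-(n+1)$: from $(x_0,y_r)$ we obtain $(x_i,y_{r+ih})\in A(D)$ for every $i\geq 0$, and varying $r$ over $\{0,1,\ldots,m-1\}$ again realizes every arc $(x_s,y_{r'})$ (take $i=s$ and $r=r'-sh$ modulo $m$), giving the same contradiction. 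Since $[3,n+1]\cup[n+2,n+m]=[3,n+m]$, the out-singular case is complete.

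The in-singular case $V(D_2)\mapsto x_0$ is handled symmetrically: starting from the family of arcs $(y_r,x_0)\in A(D)$ with $r\in\{0,1,\ldots,m-1\}$, and applying parts (b) of Remarks \ref{remark properties 1} and \ref{remark properties 1'} in the same two ranges, one derives $V(D_2)\mapsto V(D_1)$, again incompatible with $D$ being strong. The combinatorial heavy lifting has already been packaged in those four remarks, so the remaining work is just the range split on $l$ and the short observation that one complete row or column of exterior arcs incident with the singular vertex, combined with the freedom to shift the starting index, is enough to saturate every exterior position. I do not anticipate any essential obstacle beyond careful bookkeeping of indices modulo $n$ and $m$.
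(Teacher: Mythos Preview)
Your argument is correct and in fact cleaner than the paper's. Both proofs start from the same preliminary Remarks \ref{remark properties 1} and \ref{remark properties 1'} (which precede the lemma), but the paper uses them only in a single subcase and otherwise builds the cycles by hand: it locates two consecutive pairs on $C_1$ where an out-singular vertex sits next to a vertex receiving an arc from $D_2$, then writes down explicit cycles $\alpha_h$ covering lengths $[3,m+2]$ and $\beta_h$ covering $[n+1,n+m]$, and only for the residual interval $[m+3,n]$ (when $n>m+2$) does it argue by contradiction via Remark \ref{remark properties 3'}. Your route is uniform: you suppose one length $l$ is missing, split only on whether $l\le n+1$ or $l\ge n+2$, and use the propagation in Remarks \ref{remark properties 1}(a)/(b) and \ref{remark properties 1'}(a)/(b) together with the freedom to choose the starting $y$-index (coming from the full row/column of exterior arcs at the singular vertex) to force every exterior arc into a single direction, contradicting strong connectivity. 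This avoids the explicit cycle constructions entirely. The trade-off is that the paper's proof actually exhibits the cycles, while yours is a pure contradiction; on the other hand, your case split is simpler (two ranges instead of three) and the bookkeeping is lighter.
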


\begin{proof}
Let $C_1=(x_0$, $x_1$, \ldots, $x_{n-1}$, $x_0)$ and $C_2=(y_0$, $y_1$, \ldots, $y_{m-1}$, $y_0)$ be Hamiltonian cycles in $D_1$ and $D_2$, respectively.

Assume w.l.o.g. that $D_1$ contains a singular vertex, namely $x$, with respect to $D_{2}$. 

\begin{enumerate}[{Case }1:]
\item $x$ is out-singular with respect to $D_2$. 
Then $x \mapsto D_2$, and thus  $x\to y_j$ is an arc in $D$ for each $j\in[0$, $m-1]$ and $(D_2$, $x)=\emptyset$. 

Since $D$ is strong, we have that $(D_2$, $D_1)\neq \emptyset$. Then $(y$, $x')\in (D_2$, $D_1)$ for some $y\in V(D_2)$ and some $x'\in V(D_1)\setminus \{x\}$.

Since $C_1$ is Hamiltonian in $D_1$, $x$ is out-singular with respect to $D_2$ and $(D_2$, $x')\neq \emptyset$, we may find two pairs of consecutive vertices in $C_1$, $x_{s-1}$, $x_{s}$ and $x_q$, $x_{q+1}$ such that $(D_2$, $x_{s-1})\neq \emptyset$, $(D_2$, $x_{q+1})\neq \emptyset$ and  $x_s$ and $x_q$ are both out-singular vertices with respect to $D_2$ (if such two pairs of vertices do not exist, we would contradict the strong connectivity of $D$).

Suppose w.l.o.g. that $(D_2$, $x_{n-1})\neq \emptyset$ and $x_0$ is out-singular with respect to $D_2$ and $x_q$, $x_{q+1}$ satisfy $(D_2$, $x_{q+1})\neq \emptyset$ and $x_q$ is an out-singular vertex with respect to $D_2$.

Let $y_{r}\in V(D_2)$ such that $(y_{r}$, $x_{n-1})\in A(D)$. As $(x_{0}$, $y)\in A(D)$ for each $y\in V(C_2)$ we have that $\alpha_h=(y_{r}$, $x_{n-1}$, $x_{0}$, $y_{r-h})\cup(y_{r-h}$, $C_2$, $y_{r})$ is a cycle in $D$ of length $l(\alpha_h)=3+h$, for each $h\in [0$, $m-1]$. In this way, $D$ contains a cycle of length $l$ for each $l\in[3$, $ m+2]$.

Let $y_{r'}\in V(D_2)$ such that $(y_{r'}$, $x_{q+1})\in A(D)$. As $(x_{q}$, $y)\in A(D)$ for each $y\in V(C_2)$ we have that $\beta_h=(y_{r'}$, $x_{q+1})\cup (x_{q+1}$, $C_1$, $x_{q})\cup (x_q$, $y_{r-h})\cup(y_{r-h}$, $C_2$, $y_{r})$ is a cycle in $D$ of length $l(\beta_h)=1+(n-1)+1+h=n+1+h$, for each $h\in [0$, $m-1]$. In this way, $D$ contains a cycle of length $l$, for each $l\in[n+1$, $ n+m]$.

Observe that, if $n\leq m+2$, we can conclude that $D$ is pancyclic. Then we assume $n>m+2$ and we prove that $D$ contains a cycle of length $l$ for each $l\in [m+3$, $n]$.

Consider a fixed $l\in[m+3$, $n]\subset[m+2$, $m+n]$. Let $h=l-(m+1)$ be an integer in $[2$, $n-m-1]$.

Suppose by contradiction that $D$ has no cycle of length $l$. As $x_0$ is an out singular vertex, we have that $(x_0$, $y_j)\in A(D)$ for each $j\in [0$, $m-1]$ and, by Remark \ref{remark properties 3'} (a), we have that $(x_{ih}$, $y_{j+i})\in A(D)$ for each $i\geq 0$ and each $j\in [0$, $m-1]$. Hence, $\{x_{ih}\}_{i\geq 0}$ is sequence of out-singular vertices which repeats itself after the first time that $ih\equiv 0 \pmod{n}$, this is when $i=\frac{{\rm lcm}(n,\ h)}{h}=\frac{n}{\gcd(n,\ h)}=k $. In this way, $x_0$, $x_h$,\ldots, $x_{(k-1)h}$ are $k$ different out-singular vertices in $D_1$ with respect to $D_2$.

If $\gcd(n$, $h)=1$, then the sequence consists of $n$ different out-singular vertices of $D_1$ with respect to $D_2$ and thus $D_1 \mapsto D_2$ in $D$, a contradiction since $D$ is strong. Therefore, $\gcd(n$, $h)>1$ and $V(D)\setminus \{x_{ih} \colon 0\leq i< k\}$ is non-empty.

Consider the path $P=(x_{n-1}$, $x_0$, \ldots, $x_{h})$, it has length $h+1\in[3$, $n-m]$, $x_{h}$ is an out-singular vertex with respect to $D_2$ and, recall that, $(y_{r}$, $x_{n-1})\in A(D)$.
Then $\gamma= (y_r$, $x_{n-1}) \cup P \cup (x_{h}$, $y_{r+2})\cup (y_{r+2}$, $C_2$, $y_r)$ is a cycle in $D$ of length $l(\gamma)=1+(h+1)+1+(m-2)=m+1+h=l$, a contradiction to our assumption.

Therefore, $D$ must contain a cycle of length $l$ for each $l \in[m+3$, $n]$, which concludes the proof of this case. 

\item $x_0$ is in-singular with respect to $D_2$. 
The proof is similar to that of the previous case.
\end{enumerate}
\end{proof}

In next lemma we will see that, if $D$ is a strong digraph in the g.s. of two Hamiltonian digraphs, $D_1$ and $D_2$, then $D$ contains cycles of each length in $[3$, $|V(D_i)|+1]$ for each $i\in\{1$, $2\}$.

\begin{lemma}
\label{lemma cycle of length <=n+1}
Let $D_1$ and $D_2$ be two Hamiltonian digraphs or order $n$ and $m$, respectively, 
 and let $D$ be a strongly connected digraph in $D_1 \oplus D_2$. For each $r\in \{n$, $m\}$ and each integer $l\in[3$, $r+1]$,  $D$ contains a cycle of length $l$.
\end{lemma}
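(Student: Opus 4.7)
My plan is to proceed by contradiction, combining Lemma~\ref{lemma one singular vertex} with the orbital rigidity provided by Remark~\ref{remark properties 1} (and Remark~\ref{remark properties 4} for the symmetric case).

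First, if $D_i$ has a singular vertex with respect to $D_{3-i}$ for some $i\in\{1,2\}$, then by Lemma~\ref{lemma one singular vertex} the digraph $D$ is pancyclic and in particular contains cycles of all lengths in $[3,\max\{n,m\}+1]$, so the conclusion holds. Henceforth I assume that no summand contains a singular vertex with respect to the other; consequently every $y\in V(C_2)$ has both in-neighbors and out-neighbors in $V(C_1)$, and symmetrically for vertices of $V(C_1)$.

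By the symmetry of the hypothesis, it is enough to treat $r=n$ (the case $r=m$ follows by the same argument after swapping the roles of $D_1$ and $D_2$ and invoking Remark~\ref{remark properties 4} in place of Remark~\ref{remark properties 1}). Fix $l\in[3,n+1]$ and suppose, for contradiction, that $D$ has no cycle of length $l$; set $d=\gcd(n,l-2)$. By Remark~\ref{remark properties 1}, for each $y\in V(C_2)$ the set $A_y:=\{s\in[0,n-1]\colon x_s\to y \text{ in } D\}$ is closed under adding $d$ modulo $n$, hence is a union of $+d$-orbits on the index set of $C_1$. Since $A_y$ is proper and non-empty, this already forces $d\geq 2$; in particular the cases $l=3$ and $l=n+1$, for which $d=1$, are disposed of immediately. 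Moreover, parts (a) and (b) of Remark~\ref{remark properties 1} combined give $A_{y_{r+1}}=A_{y_r}+1$ (indices mod $n$); iterating around $C_2$ shows that each $A_{y_r}$ is also closed under $+m$, and is therefore a proper non-empty union of $+e$-orbits with $e=\gcd(d,m)\geq 2$.

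The main step, and the chief obstacle, is to produce an explicit cycle of length $l$ from this rigid coset structure, contradicting the assumption. Using $e\geq 2$, select an offset $\delta\in\{1,\ldots,m-1\}$ with $\delta\not\equiv 0\pmod{e}$ so that $A_{y_{r_0+\delta}}=A_{y_{r_0}}+\delta\neq A_{y_{r_0}}$, and then choose indices $s\in A_{y_{r_0}}\setminus A_{y_{r_0+\delta}}$ and $t\in A_{y_{r_0+\delta}}\setminus A_{y_{r_0}}$ with $(t-s)\equiv l-4\pmod{n}$. The walk
\[
x_s\to y_{r_0}\to x_t\to x_{t+1}\to\cdots\to x_{t+l-4}\to y_{r_0+\delta}\to x_s
\]
is then a simple cycle of length $l$ in $D$, contradicting our hypothesis. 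The delicate point is to match the offset $\delta$ and the length of the path in $C_1$ to the coset structure of the $A_y$'s while guaranteeing that all vertices of the walk are distinct; this is where the full strength of the coset rigidity is used.
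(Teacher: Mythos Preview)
Your coset setup is sound: the sets $A_y=\{s:x_s\to y\}$ are indeed unions of $+d$-cosets in $\mathbb{Z}/n\mathbb{Z}$, the shift relation $A_{y_{r+1}}=A_{y_r}+1$ holds, and the reduction to $e=\gcd(d,m)\geq 2$ is correct. The gap is entirely in the final cycle construction, which you yourself flag as the ``delicate point'' but do not resolve.

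Concretely, the displayed walk need not close. For the arc $x_{t+l-4}\to y_{r_0+\delta}$ you need $t+l-4\in A_{y_{r_0+\delta}}$, but your hypotheses only give $t\in A_{y_{r_0+\delta}}$. Since $A_{y_{r_0+\delta}}$ is a union of $+d$-cosets and $d\mid l-2$, you get $t+l-2\in A_{y_{r_0+\delta}}$, not $t+l-4$; the discrepancy $2$ need not lie in the stabilizer once $d>2$. The side condition $t-s\equiv l-4\pmod n$ plays no role in fixing this. Secondly, the implication ``$\delta\not\equiv 0\pmod e\Rightarrow A_{y_{r_0}}+\delta\neq A_{y_{r_0}}$'' is false (e.g.\ $n=12$, $e=4$, $A=\{0,2,4,6,8,10\}$, $\delta=2$), and even when the shift is nontrivial you give no argument that $s\in A_{y_{r_0}}\setminus A_{y_{r_0+\delta}}$ and $t\in A_{y_{r_0+\delta}}\setminus A_{y_{r_0}}$ with a \emph{prescribed} difference $t-s$ can be found, nor that $x_s$ avoids the $C_1$-segment $x_t,\ldots,x_{t+l-4}$.

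The paper sidesteps this matching problem altogether. From non-singularity of $y_0$ one locates \emph{consecutive} vertices $x_{n-1},x_0$ on $C_1$ with $x_{n-1}\to y_0\to x_0$. Remark~\ref{remark properties 1} (with $k$ varying and $i=0$) then yields $x_{k-1}\to y_k$ and $y_k\to x_k$ for all $k$, and in particular $y_2\to x_{2+id}$ for every $i\geq 0$. Choosing $i$ so that $2+id\equiv n-l+4\pmod n$ (possible because $d\mid n-(l-2)$), the walk $(x_0,y_1,x_1,y_2,x_{n-l+4})\cup(x_{n-l+4},C_1,x_0)$ is an $l$-cycle. The point is that only \emph{one} $+d$-shift is needed, applied to a single known arc, rather than coordinating two independent coset choices; this is what makes the construction go through without the combinatorial obstacle you ran into.
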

\begin{proof}
As $D$ is strong, if there is a singular vertex in $C_i$ with respect to $C_{3-i}$ for some $i\in\{1$, $2\}$, then Lemma \ref{lemma one singular vertex} asserts that $D$ is pancyclic. So, we assume that $C_i$ has no singular vertex with respect to $C_{3-i}$ for each $i\in\{1$, $2\}$.

In particular $y_0$ is non-singular with respect to $C_1$ and thus there exist different indices $\{i$, $i'\}\subset [0$, $n-1]$ such that $x_i \to y_0$ and $y_{0} \to x_{i'}$. 
 As $C_1$ is a cycle containing $x_i$ and $x_{i'}$, we may find two consecutive vertices in $C_1$, $x_s$ and $x_{s+1}$, such that $(x_s$, $y_0)$ and $(y_0$, $ x_{s+1})$ are both in $A(D)$. Assume w.l.o.g. that $\{(x_{n-1}$, $ y_0)$, $(y_0$, $ x_{0})\} \subset A(D)$

First, consider a fixed length $h\in[3$, $n+1]$. Suppose by contradiction that $D$ has no cycle of length $h$.

\begin{enumerate}[{Case }1:]
\item $\gcd(n$, $ h-2)=1$.
By Remark \ref{remark properties 1}, we have that $(y_{0}$, $ x_{i})\in A(D)$ for each $i\geq 0$, as $\gcd(n$, $ h-2)=1$. Hence, $y_0 \mapsto D_1$ and $y_0$ is a singular vertex with respect to $D_1$, contradicting our assumption. Therefore, $D$ contains a cycle of length $h$.

\item $\gcd(n$, $h-2)>1$. Let $d=\gcd(n$, $h-2)$. By Remark \ref{remark properties 1}, we have that $(x_{n-1+k+id}$, $y_{k})\in A(D)$ and $(y_{k}$, $x_{k+id})\in A(D)$ for each $i\geq 0$ and each $k\geq 0$. 
Hence, when $i=0$, we have that $(x_{k-1}$, $y_{k})\in A(D)$ and $(y_{k}$, $x_{k})\in A(D)$ for each $k\geq 0$.

Consider the index $k=n-(h-2)$. It can be written as $n-(h-2)=n'd$ for some $n'\geq 1$ as $d=\gcd(n$, $h-2)$ and $h-2\in [1$, $n-1]$ (and thus $n-(h-2)\in[1$, $n-1]$). Then, the vertex $x_{2+n'd}$ can be written as $x_{2+n'd}=x_{2+n-(h-2)}=x_{n-h+4}$.

In this way, the walk  $\alpha= (x_0$, $y_1$, $x_1$, $y_2$, $x_{2+n'd})$ $\cup$ $(x_{n-h+4}$, $C_1$, $x_{0})$ is a cycle of length $4+(n-(n-h+4))=h$, a contradiction.

Therefore, $D$ contains a cycle of length $h$.

\end{enumerate}

Now, consider a fixed length $h'\in[3$, $m+1]$. Suppose by contradiction that $D$ has no cycle of length $h'$.

\begin{enumerate}[{Case }i:]
\item $\gcd(m$, $h'-2)=1$. As $(y_0$, $ x_0)\in A(D)$, we have by Remark \ref{remark properties 1} that $(y_{i}$, $ x_0)\in A(D)$ for each $i\geq 0$, as $\gcd(m$, $l-2)=1$. Hence, $D_2 \mapsto x_0$ and $x_0$ is a singular vertex with respect to $D_2$, contradicting our assumption. Therefore $D$ contains a cycle of length $h'$.

\item $\gcd(m$, $h'-2)>1$. The existence of a cycle of length $h'$, can be proved in a similar way to Case 2, by considering the vertex $x_0$, which is non-singular with respect to $C_2$, and two vertices $y_r$ and $y_{r'}$ in $C_2$, such that $x_0 \to y_r$ and $y_r'\to x_0$.
\end{enumerate}
\end{proof}

As a consequence of Proposition \ref{propo merging cycles with a good pair of arcs}, Lemma \ref{lemma one singular vertex} and Theorem \ref{theorem strong and no good pair then vertex pancyclic}, we obtain a result by \cite{Cordero-Michel20161763}:

\begin{corollary}
\label{corollary hamiltonian}
Let $D_1$ and $D_2$ be two Hamiltonian digraphs and $D\in D_1\oplus D_2$. If $D$ is strong, then $D$ is Hamiltonian. 
\end{corollary}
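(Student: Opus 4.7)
The plan is to split on whether or not there exists a good pair of arcs between the two Hamiltonian cycles.

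Let $C_1 = (x_0, x_1, \ldots, x_{n-1}, x_0)$ and $C_2 = (y_0, y_1, \ldots, y_{m-1}, y_0)$ be Hamiltonian cycles in $D_1$ and $D_2$, respectively. Since $D_1$ and $D_2$ are vertex disjoint, $C_1$ and $C_2$ are two vertex disjoint cycles in $D$, and $V(C_1) \cup V(C_2) = V(D_1) \cup V(D_2) = V(D)$.

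First I would handle the case in which there is a good pair of arcs between $C_1$ and $C_2$ in $D$. By Proposition \ref{propo merging cycles with a good pair of arcs}, there is a cycle in $D$ with vertex set $V(C_1) \cup V(C_2)$, and since this set equals $V(D)$, such a cycle is a Hamiltonian cycle of $D$.

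Next I would handle the case in which there is no good pair between $C_1$ and $C_2$. Since $D$ is strong by hypothesis, Theorem \ref{theorem strong and no good pair then vertex pancyclic} applies directly and yields that $D$ is vertex-pancyclic. In particular, $D$ contains a cycle of length $|V(D)| = n + m$, i.e., a Hamiltonian cycle.

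Combining the two cases, $D$ is Hamiltonian in either case. There is no real obstacle here: the heavy lifting has already been done by the Proposition (good pair merges cycles) and the Theorem (no good pair forces vertex-pancyclicity); the role of Lemma \ref{lemma one singular vertex} is implicit, through its use in establishing the supporting results. The only thing to verify carefully is that the Hamiltonian cycles of $D_1$ and $D_2$ remain cycles of $D$ (immediate from the definition of the generalized sum, since $D\langle V(D_i)\rangle \cong D_i$) and that their vertex sets together cover $V(D)$ (immediate as well).
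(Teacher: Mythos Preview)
Your proof is correct and follows the same case split the paper intends: merge the two Hamiltonian cycles via Proposition~\ref{propo merging cycles with a good pair of arcs} when a good pair exists, and invoke Theorem~\ref{theorem strong and no good pair then vertex pancyclic} otherwise. One minor point: your argument in fact uses only these two ingredients, so Lemma~\ref{lemma one singular vertex} is not actually needed here (and since Theorem~\ref{theorem strong and no good pair then vertex pancyclic} is quoted from an earlier paper, your remark that the lemma's role is ``implicit, through its use in establishing the supporting results'' is not quite accurate---but this does not affect the argument).
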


\begin{lemma}
\label{lemma cycle of length n+id}
Let $D_1$ and $D_2$ be two digraphs  with Hamiltonian cycles, $C_1=(x_0$, $x_1$, \ldots, $x_{n-1}$, $x_0)$ and $C_2=(y_0$, $y_1$, \ldots, $y_{m-1}$, $y_0)$, respectively, $d=\gcd(n$, $m)$, and let $D$ be a strongly connected digraph in $D_1 \oplus D_2$. For each integer \begin{math}l\in\{n+id \colon 1\leq i \leq \frac{m}{d}\}\end{math}, $D$ contains a cycle of length $l$.
\end{lemma}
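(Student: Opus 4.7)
My plan is to argue by contradiction. Suppose $D$ has no cycle of length $l=n+id$ for some $i\in\{1,\ldots,m/d\}$. The extreme values of $i$ are settled by earlier results: when $i=m/d$ we have $l=n+m$ and Corollary~\ref{corollary hamiltonian} supplies a Hamiltonian cycle of $D$; when $d=1$ and $i=1$ we have $l=n+1$, which is covered by Lemma~\ref{lemma cycle of length <=n+1}. So I may assume $l\in[n+2,\,n+m-1]$, and under the natural convention $n\ge m$ (as used in the main theorems of the paper; the general case is handled symmetrically) this also gives $l\ge m+2$, placing $l$ inside the applicability ranges of both Remark~\ref{remark properties 1'} (with $h=l-(n+1)=id-1$) and Remark~\ref{remark properties 3'} (with $h'=l-(m+1)$ and the same $d=\gcd(n,m)$).

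By Lemma~\ref{lemma one singular vertex} I may assume that no vertex of $D_1$ is singular with respect to $D_2$ in $D$, and symmetrically — otherwise $D$ is already pancyclic. By Theorem~\ref{theorem strong and no good pair then vertex pancyclic} I may further assume that $D$ contains a good pair between $C_1$ and $C_2$, since otherwise $D$ would be vertex-pancyclic and would already contain a cycle of length $l$. After relabelling indices on $C_1$ and $C_2$, such a good pair can be written as $x_s\to y_r$ and $y_{r-1}\to x_{s+1}$.

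The key step is a single application of Remark~\ref{remark properties 3'}(b) — the part that shifts the $y$-index by multiples of $d$ — to the return arc $y_{r-1}\to x_{s+1}$. Denoting that remark's summation variable by $k$ to avoid clashing with our $i$, it yields $(y_{r-1+kd},\,x_{s+1})\in A(D)$ for every $k\ge 0$. Taking $k=i$ and using $y_{r-1+id}=y_{r+id-1}$, we obtain the arc $y_{r+id-1}\to x_{s+1}$ in $D$. Concatenating the $n$-vertex path $x_{s+1},x_{s+2},\ldots,x_s$ along $C_1$, the arc $x_s\to y_r$, the $(id)$-vertex path $y_r,y_{r+1},\ldots,y_{r+id-1}$ along $C_2$, and this newly obtained arc, we assemble
\[
(x_{s+1},\,x_{s+2},\,\ldots,\,x_s,\,y_r,\,y_{r+1},\,\ldots,\,y_{r+id-1},\,x_{s+1}),
\]
a cycle of length exactly $n+id=l$ in $D$, contradicting the assumption that no such cycle exists.

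The main obstacle is the regime where $n<m$ and $l=n+id<m+2$, so that Remark~\ref{remark properties 3'} cannot be applied directly; only Remark~\ref{remark properties 1'} is available, and its two parts shift the $x$-coordinate by multiples of $d$ modulo $n$ rather than the $y$-coordinate. In that case one must iterate parts~(a) and~(b) along the good pair, combining an $x$-coset shift with the diagonal shift $(1,h)$ to simulate a $y$-shift by $id$ and rederive the return arc $y_{r+id-1}\to x_{s+1}$. This modular bookkeeping, in which $\gcd(n,m)=d$ enters crucially, is the delicate part of the argument.
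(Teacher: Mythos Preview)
Your argument is correct and mirrors the paper's: after reducing via Lemma~\ref{lemma one singular vertex} and Theorem~\ref{theorem strong and no good pair then vertex pancyclic} to a good pair, apply the $d$-shift of Remark~\ref{remark properties 3'}(b) to the return arc of the good pair and assemble the cycle of length $n+id$ exactly as the paper does. You are in fact slightly more careful than the paper about the applicability range (the paper cites Remark~\ref{remark properties 1'} where the conclusion of Remark~\ref{remark properties 3'}(b) is what is actually used, and it tacitly works under $n\ge m$, the only case in which the lemma is later applied).
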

\begin{proof}
We may suppose that $C_i$ has no singular vertex with respect to $C_{3-i}$ for each $i\in \{1,2\}$, otherwise Lemma \ref{lemma one singular vertex} asserts that $D$ is pancyclic and thus we have the result.

We can also assume that $D$ contains a good pair of arcs, otherwise Theorem  \ref{theorem strong and no good pair then vertex pancyclic} implies that $D$ is vertex pancyclic. 

Suppose w.l.o.g. that $x_{n-1}\to y_1$, $y_0\to x_0$ is a good pair of arcs in $D$ and let $m'=m/d$. 
By Proposition \ref{propo merging cycles with a good pair of arcs}, $D$ contains a cycle $C$ such that $V(C)=V(C_1)\cup V(C_2)$, in this way $C$ has length $l(C)=n+m=n+m'd$.

 
Now, suppose by contradiction that $D$ contains no cycle of length $l$, for a fixed $l\in\{n+id \colon 0\leq i \leq m'-1\}$.

As $y_0\to x_{0}$ we have, by Remark \ref{remark properties 1'}, that $y_{id}\to x_0$ for each $i\geq 0$. In particular, $y_{id}\to x_0$ for each $i\in[0$, $m'-1]$.
Hence, $\gamma_i=(x_{n-1}$, $y_1)\cup (y_1$, $C_2$, $y_{id})\cup(y_{id}$, $x_0)\cup (x_0$, $C_1$, $x_{n-1})$ is a cycle of length $l(\gamma_i)=1+(id-1)+1+(n-1)=n+id$ in $D$ for each $i\in[0$, $m'-1]$, contradicting our assumption.

\end{proof}

\begin{lemma}
\label{lemma mcd(n,m)=1,2}
Let $D_1$ and $D_2$ be two vertex disjoint digraphs  with Hamiltonian cycles, $C_1=(x_0$, $x_1$, \ldots, $x_{n-1}$, $x_0)$ and $C_2=(y_0$, $y_1$, \ldots, $y_{m-1}$, $y_0)$, respectively, $n\geq m$, $d=\gcd(n$, $ m)$, and let $D$ be a strongly connected digraph in $D_1 \oplus D_2$. 
If $d\in \{1$, $2\}$, then $D$ is pancyclic.
\end{lemma}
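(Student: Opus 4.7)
\emph{Reductions and known lengths.} The plan is first to rule out the easy cases. If some vertex of $D_i$ is singular with respect to $D_{3-i}$ ($i\in\{1,2\}$), Lemma \ref{lemma one singular vertex} immediately gives $D$ pancyclic; if no good pair exists between $C_1$ and $C_2$, Theorem \ref{theorem strong and no good pair then vertex pancyclic} gives $D$ vertex-pancyclic, hence pancyclic. I may therefore assume that no singular vertex exists and, after relabelling the subscripts of $C_1$ and $C_2$, take $x_{n-1}\to y_1$, $y_0\to x_0$ as a good pair. Lemma \ref{lemma cycle of length <=n+1} already yields cycles of every length in $[3,~n+1]$ and Lemma \ref{lemma cycle of length n+id} yields cycles of every length in $\{n+id \colon 1\leq i\leq m/d\}$. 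When $d=1$, the union of these two length sets is exactly $[3,~n+m]$ and pancyclicity follows at once. When $d=2$, the union is $[3,~n+1]\cup\{n+2,~n+4,~\ldots,~n+m\}$, so what remains is to produce cycles of each odd length $l\in L:=\{n+3,~n+5,~\ldots,~n+m-1\}$ (empty if $m=2$).

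\emph{Handling the odd lengths when $d=2$.} Fix $l\in L$ and suppose, for contradiction, that $D$ has no cycle of length $l$. Set $h=l-(n+1)$; since $n$ and $m$ are even and $l$ is odd with $l\leq n+m-1$, $h$ is an even integer in $[2,~m-2]$. Because $l\in[n+2,~n+m]$, Remark \ref{remark properties 1'}(a) applied to $x_{n-1}\to y_1$ yields $(x_{n-1+i},y_{1+ih})\in A(D)$ for every $i\geq 0$; and because $l\in[m+2,~n+m]$ (using $n\geq m$), the second half of Remark \ref{remark properties 3'}(a) is also available. As $i$ ranges over $\{0,1,\ldots,n-1\}$, the subscript $n-1+i$ hits every residue modulo $n$, so for each $j\in\{0,\ldots,n-1\}$ there is an odd index $r_j$ with $(x_j,y_{r_j})\in A(D)$ (the index $1+ih$ is odd because $h$ and $m$ are both even). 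Applying the second half of Remark \ref{remark properties 3'}(a) to $(x_j,y_{r_j})$ gives $(x_j,y_{r_j+2i})\in A(D)$ for all $i\geq 0$, and since $m$ is even and $r_j$ is odd, the orbit $\{r_j+2i \pmod m\}$ exhausts every odd index of $C_2$. Thus $D_1\to\{y_r \colon r\text{ odd}\}$ and, since the definition of the g.s.\ forces the reverse arcs to be absent, every odd-indexed vertex of $C_2$ is in-singular with respect to $D_1$, contradicting the standing no-singular assumption.

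\emph{Main obstacle.} The $d=1$ case is essentially automatic once the two earlier lemmas are in hand. For $d=2$, Lemma \ref{lemma cycle of length n+id} skips every other length past $n+1$, so one must produce the odd lengths in $L$ by an independent argument. The delicate point is combining the two propagation remarks in the right way: Remark \ref{remark properties 1'}(a) controls parity on $V(C_2)$ through the even step $h$ (forcing every $x_j$ to reach some odd-indexed $y$), while the second half of Remark \ref{remark properties 3'}(a) closes each $x_j$'s outneighbourhood in $V(C_2)$ to a full parity class. Their combination collapses the arc pattern into a singular vertex, which is exactly the obstruction ruled out at the start; this is where the hypothesis $d\leq 2$ is genuinely used.
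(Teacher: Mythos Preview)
Your argument is correct. The reductions match the paper's, and for $d=2$ your contradiction step is sound: Remark~\ref{remark properties 1'}(a) indeed applies since $l\in[n+2,n+m]$, the step $h=l-(n+1)$ is even so $1+ih$ stays in the odd residue class modulo the even $m$, and then the second clause of Remark~\ref{remark properties 3'}(a) (valid since $n\geq m$ forces $l\geq m+2$) fills out the entire odd class for every $x_j$, making each odd-indexed $y_r$ in-singular.

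Your route differs from the paper's in two places. For $d=1$ you simply invoke Lemma~\ref{lemma cycle of length n+id}, whose length set $\{n+i:1\le i\le m\}$ together with $[3,n+1]$ already covers everything; the paper instead argues by contradiction for an arbitrary missing $l\in[n+2,n+m]$ and uses only the second clause of Remark~\ref{remark properties 1'} on $x_0\to y_0$ to force $y_0$ singular. For $d=2$, both proofs argue by contradiction, but the paper works with the second clauses of Remarks~\ref{remark properties 1'} and~\ref{remark properties 3'} on \emph{both} arcs of the good pair to show $x_1$ is in-singular with respect to $C_2$ (one arc supplies the even parity class, the other the odd), while you use the \emph{first} clause of Remark~\ref{remark properties 1'} to spread the single arc $x_{n-1}\to y_1$ across all of $V(C_1)$ and then close each orbit on $C_2$ with Remark~\ref{remark properties 3'}, landing on a singular vertex in $C_2$ instead. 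Your approach has the minor advantage of needing only one arc of the good pair in the $d=2$ step; the paper's is slightly more localised (no iteration over $j$).
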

\begin{proof}
If $D$ has no good pair of arcs, then Theorem  \ref{theorem strong and no good pair then vertex pancyclic} asserts that $D$ is vertex-pacyclic.

If there exists a singular vertex in $D_i$ with respect to $D_{3-i}$, for some $i\in\{1$, $2\}$, then  Lemma \ref{lemma one singular vertex} gives the result. 

Therefore, we may assume that there is no singular vertex in $D_i$ with respect to $D_{3-i}$, for each $i\in\{1$, $2\}$, and that $D$ contains a good pair of arcs. Suppose w.l.o.g. that $x_0\to y_0$ and $y_{m-1}\to x_1$ is a good pair of arcs.

By Lemma \ref{lemma cycle of length <=n+1}, we have that $D$ contains a cycle of length $l$ for each $l\in[3$, $n+1]$.

Suppose by contradiction that $D$ is not pancyclic. Then there exists an integer $l\in [n+2$, $ n+m]$ such that $D$ contains no cycle of length $l$.

As $x_0\to y_0$ we have, by Remark \ref{remark properties 1'}, that $x_{id}\to y_0$ for each $i\geq 0$.

\begin{enumerate}[{Case }1:]
\item $d=1$. Then $x_i\to y_0$, for each $i\geq 0$ and thus $y_0$ is a singular vertex, a contradiction.
\item $d=2$. Then $x_{2i}\to y_0$, for each $i\geq 0$. As $y_0$ is non-singular with respect to $D_1$, $y_0\to x_s$ for some \begin{math} x_s\in V(C_1)\setminus \{x_{2i} \colon 0\leq i \leq \frac{n}{2}-1\}=\{x_{2i+1} \colon 0\leq i\leq  \frac{n}{2}-1\}\end{math}.
Hence, $s=2s'+1$ for some $s'\in[0, \frac{n}{2}-1]$ and $y_0\to x_{2s'+1+2i}$ for each $i\geq 0$, by Remark \ref{remark properties 1'}. Consider $i=n-s'$, as $2s'+1+2(n-s')\equiv 1 \pmod{n}$, it follows that $y_0\to x_1$ and thus $y_{2j}\to x_1$ for each $j\geq 0$, by Remark \ref{remark properties 3'}. 

Recall that $y_{m-1}\to x_1$,  so it follows that $y_{m-1+2j}\to x_1$ for each $j\geq 0$, by Remark \ref{remark properties 3'}. 
Therefore, $x_1$ is a singular vertex with respect to $C_2$, a contradiction.
\end{enumerate}

\end{proof}

\section{Main results}
\label{sec:main results}
In this section we will see that, given two Hamiltonian digraphs $D_1$ and $D_2$ of order $n$ and $m$, respectively,  and a strong digraph $D$ in $D_1\oplus D_2$, we can determine if $D$ is pancyclic, vertex pancyclic or determine a set of integers $S\subset [3$, $ n+m]$ such that $D$ contains a cycle of length $l$ for each $l\in S$.

\begin{definition}
\label{d-singular}
Let $D_1$ and $D_2$ be two vertex disjoint digraphs with Hamiltonian cycles, $C_1=(x_0$, $x_1$, \ldots, $x_{n-1}$, $x_0)$ and $C_2=(y_0$, $y_1$, \ldots, $y_{m-1}$, $y_0)$, respectively, $n\geq m$, $d=\gcd(n$, $ m)$, and  $D$ a digraph in $D_1 \oplus D_2$. Let $X_i=\{x_j \colon j\equiv i \pmod{d}\}$ and $Y_i=\{y_j \colon j\equiv i \pmod{d}\}$, for each $i\in [0,d-1]$. A vertex $x_s$ (respectively, $y_r$) is \emph{$d$-singular} with respect to $C_2$ (resp. $C_1$) if, for each $i\in [0,d-1]$, either $x_s\mapsto Y_i$ or $Y_i\mapsto x_s$ (resp. either $y_r\mapsto X_i$ or $X_i\mapsto y_r$).
Otherwise,  $x_s$ (resp. $y_r$) is \emph{$d$-non-singular}. 
\end{definition}

\begin{theorem}
\label{theorem at least one d-non-singular}
Let $D_1$ and $D_2$ be two vertex disjoint digraphs with Hamiltonian cycles, $C_1=(x_0$, $x_1$, \ldots, $x_{n-1}$, $x_0)$ and $C_2=(y_0$, $y_1$, \ldots, $y_{m-1}$, $y_0)$, respectively, $n\geq m$, $d=\gcd(n$, $m)$, and let $D$ be a strongly connected digraph in $D_1 \oplus D_2$. If $D$ has at least one $d$-non-singular vertex, then $D$ is pancyclic.
\end{theorem}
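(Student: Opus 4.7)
The plan is to argue by contradiction: suppose $D$ is not pancyclic. By Lemma~\ref{lemma cycle of length <=n+1}, $D$ already contains cycles of every length in $[3,\,n+1]$, so there must exist some $l\in[n+2,\,n+m]$ for which $D$ has no cycle of length $l$. Since $n\geq m$, this $l$ also satisfies $l\geq n+2\geq m+2$, so $l\in[m+2,\,n+m]$ as well; this places $l$ inside the ranges covered by both Remark~\ref{remark properties 1'} and Remark~\ref{remark properties 3'} (applied with $d=\gcd(n,m)$). Fix such a missing length $l$ once and for all.

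Next I will exploit the $d$-non-singular vertex; by symmetry I describe only the case $v=x_s\in V(C_1)$, the case $v\in V(C_2)$ being analogous. By the definition of $d$-non-singularity there exist $i^{*}\in[0,\,d-1]$ and two vertices $y_r,\,y_{r'}\in Y_{i^{*}}$ (so $r\equiv r'\pmod d$) with $x_s\to y_r$ and $y_{r'}\to x_s$ both in $A(D)$. I will then apply the second family of Remark~\ref{remark properties 3'}(a) to the arc $x_s\to y_r$; because $D$ has no cycle of length $l\in[m+2,\,n+m]$, this forces $\{(x_s,\,y_{r+id})\}_{i\geq 0}\subseteq A(D)$. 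Since $d\mid m$ and $r\equiv r'\pmod d$, the progression $\{r+id\bmod m:i\geq 0\}$ sweeps out exactly the class $Y_{i^{*}}$ and in particular hits the index $r'$; hence $x_s\to y_{r'}\in A(D)$, which contradicts $y_{r'}\to x_s$ by the defining property of the generalized sum (exactly one arc between each pair of vertices from distinct summands).

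For $v=y_r\in V(C_2)$, the same plan works with the roles of $C_1$ and $C_2$ interchanged: the witnesses are $x_s,\,x_{s'}\in X_{j^{*}}$ with $y_r\to x_s$ and $x_{s'}\to y_r$, and the second family of Remark~\ref{remark properties 1'}(a) applied to $x_{s'}\to y_r$ yields $\{(x_{s'+id},\,y_r)\}_{i\geq 0}\subseteq A(D)$, i.e.\ $X_{j^{*}}\to y_r$, which contradicts $y_r\to x_s$. The only nontrivial observation, and where I expect the real thought to lie, is that the "second families" appearing in Remarks~\ref{remark properties 1'}(a) and~\ref{remark properties 3'}(a) encode precisely the $d$-periodicity of arcs to or from a fixed vertex that the definition of $d$-non-singularity is designed to preclude; once this is spotted, no index bookkeeping is needed and the apparent difficulty of matching cycle lengths across mixed arithmetic progressions simply dissolves.
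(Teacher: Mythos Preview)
Your argument is correct and is essentially the same as the paper's: both assume a missing length $l\in[n+2,\,n+m]\subseteq[m+2,\,n+m]$ and invoke the ``second families'' of Remarks~\ref{remark properties 1'} and~\ref{remark properties 3'} to force $d$-periodicity of the exterior arcs incident with a fixed vertex. The only cosmetic difference is that the paper phrases the contradiction globally (showing \emph{every} vertex becomes $d$-singular), whereas you work directly with the given $d$-non-singular witness; the underlying mechanism is identical.
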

\begin{proof}
As $D$ is strong we have, by Lemma \ref{lemma cycle of length <=n+1}, that $D$ contains a cycle of length $l$, for each $l\in[3$, $n+1]$.  

Suppose by contradiction that there is an integer $l\in[n+2$, $n+m]$ such that $D$ has no cycle of length $l$. We will see that all vertices in $C_1$ are $d$-singular with respect to $C_2$ and all vertices in $C_2$ are $d$-singular with respect to $C_1$.

For each $i\in [0,d-1]$, let $X_i=\{x_j \colon j\equiv i \pmod{d}\}$ and $Y_i=\{y_j \colon j\equiv i \pmod{d}\}$. 
Take $x_s\in V(C_1)$ and $r\in[0,d-1]$. 

If $x_s\to y_r$, then $x_s\to y_{r+jd}$ for each $j\geq 0$, by Remark \ref{remark properties 1'}. Consider an index $i\in [0$, $m-1]$ such that $i\equiv r \pmod{d}$, then we have that $i=qd+r$ for some $q\geq 0$ and thus $x_s\to y_i$. Therefore,  $x_s\mapsto Y_r$. 

And if $y_r\to x_s$, then $y_{r+jd}\to x_{s}$ for each $j\geq 0$, by Remark \ref{remark properties 3'}. Consider an index $i\in [0$, $m-1]$ such that $i\equiv r \pmod{d}$ (recall that $r\in[0,d-1]$ and that $d$ divides $m$), then we have that $i=qd+r$ for some $q\geq 0$ and thus $y_i\to x_s$. Therefore,  $Y_r\mapsto x_s$. 

As $r$ is arbitrary, it follows that $x_s$ is a $d$-singular with respect to $C_2$ and, as $x_s$ was taken arbitrarily,  $x_i$ is  a $d$-singular vertex with respect to $C_2$ for each $i\in[0,n-1]$.

It can be proved in a similar way that each vertex in $C_2$ is $d$-singular with respect to $C_1$.

%
%
%

Hence, each vertex in $C_i$ is $d$-singular with respect to $C_{3-i}$ for each $i\in \{1$, $2\}$, contradicting the hypothesis. Then, $D$ contains a cycle of length $l$ for each $l\in[n+2$, $n+m]$ and thus $D$ is pancyclic.
\end{proof}

\begin{definition}
\label{d*-singular}
Let $D_1$ and $D_2$ be two vertex disjoint digraphs with Hamiltonian cycles, $C_1=(x_0$, $x_1$, \ldots, $x_{n-1}$, $x_0)$ and $C_2=(y_0$, $y_1$, \ldots, $y_{m-1}$, $y_0)$, respectively,  $d=\gcd(n$, $ m)$ and let $D$ be a digraph in $D_1 \oplus D_2$. A vertex $x_s$ (respectively, $y_r$) is \emph{$d^*$-singular} with respect to $C_2$ (resp. $C_1$) if there exists an $i\in [0$, $m-1]$ (resp. $i\in[0$, $n-1]$), such that either $x_s\to y_{i+j}$ for each $j\in[0$, $d-1]$ or $y_{i+j}\to x_s$ for each $j\in[0$, $d-1]$ (resp. either $y_r\to x_{i+j}$ for each $j\in[0$, $d-1]$ or $x_{i+j}\to y_r$ for each $j\in[0$, $d-1]$).
\end{definition}

\begin{theorem}
\label{thoerem at least one d*-singular}
Let $D_1$ and $D_2$ be two vertex disjoint digraphs with Hamiltonian cycles, $C_1=(x_0$, $x_1$, \ldots, $x_{n-1}$, $x_0)$ and $C_2=(y_0$, $y_1$, \ldots, $y_{m-1}$, $y_0)$, respectively, $n\geq m$, $d=\gcd(n$, $ m)$ and let $D$ be a strongly connected digraph in $D_1 \oplus D_2$. If $D$ has at least one $d^*$-singular vertex in $C_i$ with respect to $C_{3-i}$ for some $i\in\{1$, $2\}$, then $D$ is pancyclic.
\end{theorem}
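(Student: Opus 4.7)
The plan is a proof by contradiction that turns the local $d^*$-singularity condition into an ordinary (global) singularity, and then invokes Lemma~\ref{lemma one singular vertex}. The decisive observation is that, for any forbidden cycle length $l$, Remarks~\ref{remark properties 1'} and~\ref{remark properties 3'} each supply a ``jump by $d$'' in one of the two coordinates of an exterior arc, while $d^*$-singularity already supplies the other coordinate as a consecutive block of length $d$. Combining these two pieces of information covers \emph{all} vertices of the opposite summand.

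Because $D$ is strong, Lemma~\ref{lemma cycle of length <=n+1} (applied with $r=n$) gives cycles of every length in $[3,n+1]$; hence, if $D$ were not pancyclic there would be a forbidden length $l\in[n+2,n+m]$. Since $n\geq m$, this $l$ also lies in $[m+2,n+m]$, so both Remark~\ref{remark properties 1'} and Remark~\ref{remark properties 3'} are available. Suppose first that $x_s\in V(C_1)$ is $d^*$-singular with respect to $C_2$, witnessed by some $i\in[0,m-1]$. In the subcase $x_s\to y_{i+j}$ for every $j\in[0,d-1]$, the second conclusion of Remark~\ref{remark properties 3'}(a), applied to each of the $d$ arcs $(x_s,y_{i+j})$, yields $x_s\to y_{i+j+kd}$ for all $k\geq 0$ and all $j\in[0,d-1]$. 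Because $d\mid m$, the $d$ arithmetic progressions $\{i+j+kd\pmod{m}:k\geq 0\}$ partition $[0,m-1]$, so $x_s\to y_r$ for every $r\in[0,m-1]$; uniqueness of exterior arcs between summands then forces $x_s\mapsto V(D_2)$. The symmetric subcase $y_{i+j}\to x_s$ for every $j\in[0,d-1]$ yields $V(D_2)\mapsto x_s$ via the second conclusion of Remark~\ref{remark properties 3'}(b). Either way, $D_1$ contains a singular vertex with respect to $D_2$.

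The case where the $d^*$-singular vertex lies in $V(C_2)$ is entirely analogous but uses Remark~\ref{remark properties 1'} (whose ``second conclusions'' propagate the $x$-coordinate by~$d$ instead of the $y$-coordinate), together with the partition of $[0,n-1]$ into residue classes modulo $d$, to produce a singular vertex of $D_2$ with respect to $D_1$. In every case Lemma~\ref{lemma one singular vertex} now guarantees that $D$ is pancyclic, contradicting the assumed absence of a cycle of length $l$, and the theorem follows.

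The only delicate point is bookkeeping: for each of the four (cycle, direction) combinations one has to identify which assertion, part~(a) or part~(b), of which Remark (1' or 3'), is the one whose ``jump by $d$'' matches the summand and direction given by $d^*$-singularity. Once these pairings are set correctly, the partition-of-residues step is identical to the one appearing in the proof of Theorem~\ref{theorem at least one d-non-singular}, so no genuinely new idea is required beyond it.
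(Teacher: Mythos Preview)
Your proof is correct and follows essentially the same approach as the paper: assume a missing length $l\in[n+2,n+m]$, use the second conclusions of Remark~\ref{remark properties 3'} (or Remark~\ref{remark properties 1'} when the $d^*$-singular vertex lies in $C_2$) to propagate the consecutive block of $d$ arcs to all residue classes modulo $d$, and then invoke Lemma~\ref{lemma one singular vertex} for a contradiction. You are in fact slightly more careful than the paper in explicitly verifying that $l\in[m+2,n+m]$ so that Remark~\ref{remark properties 3'} applies, and in treating the $C_2$ case separately rather than sweeping it under a ``w.l.o.g.''
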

\begin{proof}
As $D$ is strong, we have by Lemma \ref{lemma cycle of length <=n+1} that $D$ contains a cycle of length $l$ for each $l\in[3$, $n+1]$. 

Suppose by contradiction that there exists an $l\in[n+2$, $ n+m]$ such that $D$ contains no cycle of length $l$.

Assume w.l.o.g. that $C_1$ contains a $d^*$-singular vertex with respect to $C_2$ and w.l.o.g. we suppose that this vertex is $x_0$. Then, there exists an index $i\in [0$, $m-1]$ such that either $x_s\to y_{i+j}$ for each $j\in[0$, $d-1]$ or $y_{i+j}\to x_s$ for each $j\in[0$, $d-1]$. Suppose w.l.o.g. that $i=0$.

\begin{enumerate}[{Case }1:]
\item $x_0\to y_i$ for each $i\in[0$, $d-1]$. As $D$ is strong and contains  no  cycle of length $l$, we have by Remark \ref{remark properties 3'}  that $x_0\to y_{i+jd}$ for each $j\geq 0$ and each $i\in[0$, $d-1]$. Hence, $x_0\mapsto C_2$ and thus $x_0$ is a singular vertex with respect to $C_2$. By Lemma \ref{lemma one singular vertex} we have that $D$ is pancyclic, contradicting our assumption. 

\item $y_i\to x_0$ for each $i\in[0$, $d-1]$. As $D$ is strong and contains  no  cycle of length $l$, we have by Remark \ref{remark properties 3'}  that $y_{i+jd} \to x_0$ for each $j\geq 0$ and each $i\in[0$, $d-1]$. Hence, $C_2\mapsto x_0$ and thus $x_0$ is a singular vertex with respect to $C_2$. By Lemma \ref{lemma one singular vertex} we have that $D$ is pancyclic, contradicting our assumption.
\end{enumerate}

Therefore,  $D$ contains a cycle of length $l$ for each $l\in[M_2+2$, $ n+m]$. Hence, $D$ is pancyclic.
\end{proof}

The following classification theorem is a direct consequence of Lemmas \ref{lemma one singular vertex}, \ref{lemma cycle of length <=n+1}, \ref{lemma cycle of length n+id}, \ref{lemma mcd(n,m)=1,2} and Theorems \ref{theorem strong and no good pair then vertex pancyclic}, \ref{theorem at least one d-non-singular} and \ref{thoerem at least one d*-singular}. 

\begin{theorem}
\label{theorem classification}
Let $D_1$ and $D_2$ be two Hamiltonian digraphs of order $n$ and $m$, respectively; $n\geq m$; $d=\gcd(n$, $m)$; and $D$ a strongly connected digraph in $D_1\oplus D_2$. Then one of the following assertions holds:
\begin{enumerate}[(i)] 
\item $D$ is vertex-pancyclic;

\item $D$ is pancyclic; or

\item $D$ is Hamiltonian and it contains a cycle of length $l$ for each \begin{math}l\in [3,~n+1]\cup \{n+id \colon 0\leq i < m/d\}\end{math}.
\end{enumerate}
\end{theorem}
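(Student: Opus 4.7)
The plan is to proceed by a cascading case analysis, invoking the preliminary results in decreasing order of the strength of the conclusion they yield. Throughout, I would keep in mind that (i) $\Rightarrow$ (ii) $\Rightarrow$ (iii), so only the strongest applicable conclusion needs to be identified.

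First I would dispose of the vertex-pancyclic case: if $D$ contains no good pair, Theorem \ref{theorem strong and no good pair then vertex pancyclic} immediately yields (i). So from this point on I would assume that $D$ possesses at least one good pair.

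Next I would check in turn the four sufficient conditions for pancyclicity that have been prepared. (a) If $d\in\{1,2\}$, Lemma \ref{lemma mcd(n,m)=1,2} gives (ii). (b) If there exists a singular vertex in some $D_i$ with respect to $D_{3-i}$, Lemma \ref{lemma one singular vertex} gives (ii). (c) If $D$ has at least one $d$-non-singular vertex, Theorem \ref{theorem at least one d-non-singular} gives (ii). (d) If $D$ has a $d^*$-singular vertex in some $D_i$ with respect to $D_{3-i}$, Theorem \ref{thoerem at least one d*-singular} gives (ii). In any of these subcases, $D$ is pancyclic, so (ii) holds.

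If none of (a)--(d) applies, I would verify (iii) directly. Corollary \ref{corollary hamiltonian} (equivalently, Theorem \ref{theorem strong then Hamiltonian}) gives that $D$ is Hamiltonian. Lemma \ref{lemma cycle of length <=n+1} produces cycles of every length in $[3,n+1]$, which in particular covers the value $l=n$. Lemma \ref{lemma cycle of length n+id} provides cycles of every length $n+id$ for $1\le i\le m/d$; the values $i\in\{1,\ldots,m/d-1\}$ supply the lengths $n+d, n+2d,\ldots, n+m-d$, while $i=m/d$ recovers the Hamiltonian length $n+m$. Combined with $n\in[3,n+1]$, this shows that every $l\in [3,n+1]\cup\{n+id\colon 0\le i<m/d\}$ is realized as the length of some cycle, so (iii) holds.

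Observe that the last paragraph in fact demonstrates that (iii) is always satisfied under the hypotheses of the theorem (even in cases where (i) or (ii) holds); the preceding case analysis simply records when the stronger conclusions (i) and (ii) are additionally available. The genuine work, namely showing that the presence of a good pair together with a $d$-non-singular or $d^*$-singular vertex forces pancyclicity, has already been carried out in Theorems \ref{theorem at least one d-non-singular} and \ref{thoerem at least one d*-singular}, so the classification theorem itself is only an assembly step and presents no further obstacle.
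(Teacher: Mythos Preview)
Your proposal is correct and follows exactly the paper's approach: the paper itself gives no detailed proof, stating only that the theorem ``is a direct consequence of Lemmas \ref{lemma one singular vertex}, \ref{lemma cycle of length <=n+1}, \ref{lemma cycle of length n+id}, \ref{lemma mcd(n,m)=1,2} and Theorems \ref{theorem strong and no good pair then vertex pancyclic}, \ref{theorem at least one d-non-singular} and \ref{thoerem at least one d*-singular},'' and your write-up is precisely the assembly of these ingredients. Your closing observation---that Lemmas \ref{lemma cycle of length <=n+1} and \ref{lemma cycle of length n+id} together with Hamiltonicity already establish (iii) unconditionally, so that the remaining cited results serve only to upgrade the conclusion to (i) or (ii) when possible---is a valid and clarifying remark that the paper leaves implicit.
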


The following theorem is an extension of Theorem \ref{theorem classification} for a g.s. of $k$ Hamiltonian digraphs, which is the strongest result of this paper.

\begin{theorem}
\label{theorem g.s. of n Hamiltonians}
Let $D_1$, $D_2$,  \ldots, $D_k$ be a collection of pairwise vertex disjoint Hamiltonian digraphs, $n_i=|V(D_i)|$ for each $i\in[1$, $k]$, and $D$ a strongly connected digraph in $\oplus_{i=1}^k D_i$. Then one of the following assertions holds: 
\begin{enumerate}[(i)]
\item $D$ is vertex-pancyclic;

\item $D$ is pancyclic; or

\item $D$ is Hamiltonian and it contains a cycle of length $l$ for each \begin{math}l\in [3,~ \max\{\left(\sum_{i\in S} n_i\right) + 1 \colon S\subset[1,~k] \text{ with } |S|=k-1\}]\end{math}.
\end{enumerate}
\end{theorem}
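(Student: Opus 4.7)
The plan is to proceed by induction on $k$, with Theorem \ref{theorem classification} serving as the main engine. For the base case $k=2$, Theorem \ref{theorem classification} delivers the conclusion directly: its case (iii) produces the cycle-length set $[3, \max\{n_1, n_2\} + 1] \cup \{\max\{n_1, n_2\} + id : 0 \leq i < \min\{n_1, n_2\}/d\}$, which contains the required $[3, M]$ with $M = \max\{n_1, n_2\} + 1$.

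For the inductive step with $k \geq 3$, relabel so that $n_1 = \min_i n_i$, so that $M = \sum_{i=2}^{k} n_i + 1$. Let $H := D\langle V(D) \setminus V(D_1)\rangle$; by Remark \ref{remark subsuma}, $H \in \oplus_{i=2}^{k} D_i$. In the favourable subcase where $H$ is strong, Theorem \ref{theorem strong then Hamiltonian} ensures $H$ is Hamiltonian, so $D$ is a strong element of the two-summand sum $H \oplus D_1$ with both summands Hamiltonian; applying Theorem \ref{theorem classification} to this pair yields vertex-pancyclicity, pancyclicity, or the statement that $D$ is Hamiltonian with cycles of every length in $[3, \max\{|V(H)|, n_1\} + 1]$. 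Since $|V(H)| = M - 1 \geq n_1$ (as $k \geq 3$ and $n_1$ is the minimum), this maximum equals $|V(H)|$ and we recover exactly $[3, M]$, finishing this subcase.

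In the remaining subcase where $H$ is not strong, I would iterate the reduction with other single-summand removals $H_j := D\langle V(D) \setminus V(D_j)\rangle$ for $j \in \{2, \ldots, k\}$: whenever $H_j$ is strong, Theorem \ref{theorem classification} applied to the pair $(H_j, D_j)$ contributes cycles of lengths $[3, \max\{|V(D)| - n_j, n_j\} + 1]$ plus the arithmetic-progression tail of case (iii) of that theorem. These partial ranges, combined with the preliminary Lemmas \ref{lemma one singular vertex}--\ref{lemma mcd(n,m)=1,2} that manufacture cycles of particular short or long lengths, must be assembled to cover every integer in $[3, M]$. If no $H_j$ is strong for any $j$, then for every pair $\{V(D_i), V(D_j)\}$ the exterior arcs must be unidirectional, forcing the quotient digraph (where each $D_i$ is contracted to a single vertex) to be a strong tournament on $k$ vertices; concatenating segments of the Hamiltonian cycles $C_i$ along a Hamiltonian cycle of that quotient tournament then realizes every length in $[3, |V(D)|]$, so $D$ is pancyclic.

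The principal obstacle is precisely the subcase where $H$ is not strong: a single invocation of Theorem \ref{theorem classification} on a two-summand decomposition does not produce the full interval $[3, M]$, and one must either patch together multiple applications (with different choices of $j$) via the preliminary lemmas, or exploit the rigid quotient-tournament structure of $D$ to obtain pancyclicity directly. The careful bookkeeping of the arithmetic-progression tails and the gcd conditions appearing in Lemmas \ref{lemma cycle of length n+id} and \ref{lemma mcd(n,m)=1,2} will be essential to show that every integer in $[3, M]$ is indeed realised as a cycle length.
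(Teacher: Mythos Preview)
Your base case and the subcase where $H = D\langle V(D)\setminus V(D_1)\rangle$ is strong are correct and coincide with the paper's argument: once $H$ is Hamiltonian, $D\in H\oplus D_1$ and Theorem~\ref{theorem classification} finishes.

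The genuine gap is in the subcase where $H$ is not strong. First, applying Theorem~\ref{theorem classification} to a pair $(H_j,D_j)$ with $j\neq 1$ only guarantees cycles of lengths in $[3,\max\{|V(H_j)|,n_j\}+1]$ together with the arithmetic-progression tail, and when $n_j$ dominates (say $n_1=2$, $n_2=100$, $n_3=3$) this misses lengths just below $M=\sum_{i\ge 2}n_i+1$; your sentence ``must be assembled'' is precisely where a proof is needed and none is given. Second, the implication ``no $H_j$ is strong $\Rightarrow$ all exterior arcs between every pair of summands are unidirectional'' is asserted without proof; it is equivalent to saying that a strong semicomplete digraph on $k$ vertices in which the deletion of every single vertex destroys strong connectivity must be a tournament, and this requires an argument. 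Third, even granting that the quotient is a strong tournament, the one-line claim that concatenating segments along a Hamiltonian cycle of the quotient realises \emph{every} length in $[3,|V(D)|]$ skips the short lengths $3,\ldots,k-1$, for which one would still need Moon's theorem (cycles of every length in the quotient) and a further concatenation argument.

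The paper repairs all of this by changing the dichotomy. It forms the quotient semicomplete digraph $Q$ on $\{D_1,\ldots,D_k\}$ and looks at $Q'=Q-D_1$. If $Q'$ contains \emph{any} cycle $\alpha$ (of length $r\in[2,k-1]$, not necessarily spanning), the subdigraph of $D$ induced by the summands on $\alpha$ is strong, hence Hamiltonian by Theorem~\ref{theorem strong then Hamiltonian}; calling it $D_0$, one has $D\in D_0\oplus\bigl(\oplus_{j\notin\alpha}D_j\bigr)$, a generalized sum with strictly fewer than $k$ Hamiltonian summands, and the full induction hypothesis (for all $k'<k$, not just $k'=2$) applies and yields exactly $[3,M]$. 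If $Q'$ is acyclic, then it is a transitive tournament, so \emph{all} exterior arcs among $D_2,\ldots,D_k$ are completely determined, and explicit path-concatenation constructions produce cycles of every length in an interval $[r+3,\sum_{i\ge2}n_i+r+1]$; the remaining short lengths $[3,r+2]$ are obtained from one further call to Theorem~\ref{theorem classification} on the two-summand subdigraph $D\langle V(D_1)\cup V(D_2)\rangle$, which is strong in the relevant subcase. The key idea you are missing is the ``merge a cycle in the quotient'' step, which allows recursion to an intermediate number of summands rather than forcing a reduction all the way down to two.
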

\begin{proof}
We will proceed by induction on $k$.

If $k=2$, then Theorem \ref{theorem classification} asserts that $D$ is vertex-pancyclic, $D$ is pancyclic or $D$ is Hamiltonian and it contains a cycle of length $l$ for each $l\in[3$, $N+1]$, where $N=\max\{n_1$, $n_2\}$.

Suppose that the hypothesis holds for each $k'$, with $2\leq k'\leq k-1$. This is: if $D_1$, $D_2$,  \ldots, $D_{k'}$ are $k'$ pairwise vertex disjoint Hamiltonian digraphs, $n_i=|V(D_i)|$ for each $i\in[1$, $k']$, and $D'$ is a strong digraph in $\oplus_{i=1}^{k'} D_i$. Then $D'$ is vertex-pancyclic, $D'$ is pancyclic or $D'$ is Hamiltonian and it contains a cycle of length $l$ for each \begin{math}l\in [3,~\left(\sum_{i\in S'} n_i\right) + 1]\end{math}, for each $S'\subset [1$, $k']$ such that $|S'|=k'-1$.
 
Let $D$ be as in the hypothesis and suppose w.l.o.g. that $n_1\leq n_j$ for each $j\in [2$, $k]$. By Theorem \ref{theorem strong then Hamiltonian}, we know that $D$ is Hamiltonian.

Let $H$ be the digraph with vertex set $V(H)=\{D_1$, $D_2$, \ldots, $D_{k}\}$ and such that $D_i\to D_j$ iff $(D_i$, $D_j)\neq \emptyset$ in $D$. In this way, $H$ is a strong semicomplete digraph. 
Take $H'=H\langle \{D_2$, \ldots, $D_{k}\} \rangle$. 

\begin{enumerate}[{Case }1:]
\item $H'$ contains a cycle $\alpha=(D_{i_0}$, $D_{i_1}$, \ldots, $D_{i_{r-1}}$, $D_{i_0})$ of length $r\in[2$, $k-1]$ (we will consider that a pair of symmetric arcs in $H'$ forms a cycle of length 2). By Remark \ref{remark subsuma}, the subdigraph of $D$ induced by $\bigcup_{j=0}^{r-1} V(D_{i_j})$, namely \begin{math}D_0=D\langle \bigcup_{j=0}^{r-1} V(D_{i_j})\rangle \end{math}, is strong and $D_0\in \oplus_{j=0}^{r-1} D_{i_j}$. By Theorem \ref{theorem strong then Hamiltonian}, $D_0$ is Hamiltonian. Now, let \begin{math}J=[0,~k]\setminus \{i_0, \ldots, i_{r-1}\}\end{math}, $n_0=|V(D_0)|$ and notice that $D\in \oplus_{j\in J} D_j$, as $D$ satisfies the definition of g.s. of the $D_j$'s with $j\in J$. Moreover, each summand is a Hamiltonian digraph and $D$ is strong. Hence, by induction hypothesis, $D$ is vertex-pancyclic, $D$ is pancyclic or $D$ is Hamiltonian and it contains a cycle of length $l$ for each $l\in [3$, $\sum_{j\in S} n_j + 1]$, for each $S\subset J$ with $|S|=|J|-1$ elements. In particular, as $D_1\notin V(\alpha)$, this is true when $S= J\setminus \{1\}$. 
Observe that \begin{math}\sum_{j\in J\setminus\{1\}} n_j = n_0 + \sum_{j\in [2,~k]\setminus \{i_0, \ldots, i_{r-1}\}} n_j = \sum_{j\in \{i_0, \ldots, i_{r-1}\}} n_j + \sum_{j\in [2,~k]\setminus \{i_0, \ldots, i_{r-1}\}} n_j = \sum_{j=2}^k D_j\end{math}.
Therefore, $D$ contains a cycle of length $l$ for each \begin{math}l\in [3,~\sum_{j=2}^k n_j + 1]\end{math}, which gives the result as $n_1\leq n_j$ for each $j\in [2$, $k]$.

\item  $H'$ is acyclic (and contains no symmetric arcs). Then $H'$ is an acyclic tournament and thus $H'$ is transitive and contains a unique Hamiltonian path $P$. Suppose w.l.o.g. that $P=(D_2$, \ldots, $D_{k})$. Then, for each pair of different indices $\{i$, $j\}\subset[2$, $k]$, we have that $D_i \mapsto D_j$ in $D$ iff $2\leq i < j\leq k$ (see Chapter 2 of the book of \cite{Bang-Jensen2018}). This is, for each pair of different indices $\{i$, $j\}\subset[2$, $k]$, each $u\in V(D_i)$ and each $v\in V(D_j)$, we have that $(u$, $v)\in A(D)$ iff $2\leq i < j\leq k$.

For each $i\in [1$, $k]$, let $C_i=(x_0^i$, $x_1^i$, \ldots, $x_{n_i-1}^i$, $x_0^i)$ be a Hamiltonian cycle in $D_i$ and $P_i=(x_0^i$, $x_1^i$, \ldots, $x_{n_i-1}^i)$ the Hamiltonian path obtained from $C_i$ by removing the last arc $(x_{n_i-1}^i$, $x_0^i)$.  
Observe that for each pair of different indices $\{i$, $j\}\subset[2$, $k]$, each $x_r^i\in V(D_i)$ and each $x_s^j\in V(D_j)$, we have that $(x_r^i$, $x_s^j)\in A(D)$ iff $2\leq i < j\leq k$.

Since $D$ is strong, necessarily $(D_1$, $D_2)\neq \emptyset$ and $(D_k$, $D_1)\neq \emptyset$ in $D$. Suppose w.l.o.g. that $(x_{r}^1$, $x_{0}^2)\in A(D)$, for some $r\in [0$, $n_1-1]$, and $(x_{n_k-1}^k$, $x_{0}^1)\in A(D)$. 

\begin{enumerate}[{Case 2.}1:]
\item  $(D_2$, $D_1)= \emptyset$. Hence,  $D_1 \mapsto D_2$ and thus $(x_t^1$, $x_s^2)\in A(D)$, for each $t\in [0$, $n_1-1]$ and each $s\in[0$, $n_2-1]$.  
We will construct cycles of each length in $[3$, $\sum_{i=1}^k n_i]$, by taking  the Hamiltonian paths $P_1$, \ldots, $P_{j-1}$,  a subpath of length $i$ of  $P_j$,  concatenate them  with the exterior arcs mentioned above and close the cycle with an arc from a vertex in $P_j$ to  $x_{n_k-1}^k$ followed by the arc  $(x_{n_k-1}^k$, $x_{0}^1)$.  Consider the following cycles:
\begin{itemize}
\item $\beta(1,i)=(x_0^1$, $P_1$, $x_i^1)\cup (x_i^1$, $x_0^2$, $x_{n_k-1}^k$, $x_{0}^1)$ is a cycle in $D$ of length $l(\beta(1,i))=i+3$ for each $i\in[0$, $n_{1}-1]$; 
\item $\gamma(j,i)=P_1 \cup (x_{n_1-1}^1$, $x_{0}^{2}) \cup P_2 \cup \cdots \cup P_{j-1}\cup (x_{n_{j-1}-1}^{j}$, $x_{0}^{j}) \cup (x_0^j$, $P_j$, $x_i^j)\cup (x_i^j$, $x_{n_k-1}^k$, $x_{0}^1)$ is a cycle in $D$ of length \begin{math}l(\gamma(j,i))=\sum_{l=1}^{j-1} n_l+i+2\end{math} for each $j\in [2$, $k-1]$ and each $i\in[1$, $n_{j}-1]$;
\item $\eta(k,i)=P_1 \cup (x_{n_1-1}^1$, $x_{0}^{2}) \cup P_2 \cup \cdots \cup P_{k-1}\cup (x_{n_{k-1}-1}^{k-1}$, $x_{n_k-(i+1)}^{k})\cup (x_{n_k-(i+1)}^k$, $P_k$, $x_{n_k-1}^k)\cup (x_{n_k-1}^k$, $x_{0}^{1})$ is a cycle in $D$ of length \begin{math}l(\eta(k,i))=\sum_{l=1}^{k-1} n_l+i+1\end{math} for each $i\in[1$, $n_k-1]$;
\end{itemize}
Therefore, $D$ is pancyclic.
\item  $(D_1$, $D_k)= \emptyset$. Then $D_k \mapsto D_1$ and, by similar constructions to those of the previous case, it is possible to prove that $D$ is pancyclic. 

\item  $(D_2$, $D_1)\neq \emptyset$ and $(D_1$, $D_k)\neq \emptyset$. First, we will construct cycles of each length in $[r+3$, $\sum_{i=2}^k n_i + r + 1]$, by taking  the Hamiltonian paths $P_2$, \ldots, $P_{j-1}$,  a subpath of length $i$ of  $P_j$,  concatenate them  with the exterior arcs mentioned above and, to close the cycle, go to vertex $x_{n_k-1}^k$ by means of an exterior arc and then pass through the path $(x_{n_k-1}^k$, $x_{0}^1$, $x_1^1$, \ldots,  $x_{r}^1$, $x_{0}^2)$ (recall that  $(x_{r}^1$, $x_{0}^2)\in A(D)$, for some $r\in [0$, $n_1-1]$). Consider the following cycles:
\begin{itemize}
\item $\varphi(2,i)= (x_0^2$, $P_2$, $x_i^2)\cup (x_i^2$, $x_{n_k-1}^k$, $x_{0}^1) \cup (x_{0}^1$, $P_1$, $x_{r}^1) \cup (x_{r}^1$, $x_{0}^2) $ is a cycle of length $l(\varphi(2,i))=i+2+r+1$ for each  $i\in[0$, $n_{2}-1]$;
\item $\varphi(j,i)= P_2 \cup \cdots \cup P_{j-1}\cup (x_{n_{j-1}-1}^{j-1}$, $x_{0}^{j})\cup (x_0^j$, $P_j$, $x_i^j)\cup (x_i^j$, $x_{n_k-1}^k$, $x_{0}^1) \cup (x_{0}^1$, $P_1$, $x_{r}^1) \cup (x_{r}^1$, $x_{0}^2) $ is a cycle of length $l(\varphi(j,i))=\sum_{l=2}^{j-1} n_l+i+2+r+1$ for each $j\in [3$, $k-1]$ and each $i\in[0$, $n_{j}-1]$;
\item $\psi(k,i)=P_2 \cup \cdots \cup P_{k-1}\cup (x_{n_{k-1}-1}^{k-1}$, $x_{n_k-(i+1)}^{k})\cup (x_{n_k-(i+1)}^k$, $P_k$, $x_{n_k-1}^k)\cup (x_{n_{k}-1}^{k}$, $x_{0}^{1}) \cup (x_{0}^1$, $P_1$, $x_{r}^1) \cup (x_{r}^1$, $x_{0}^2)$ is a cycle of length $l(\psi(k,i))=\sum_{l=2}^{k-1} n_l+i+1+r+1$ for each $i\in[0$, $n_k-1]$.
\end{itemize}
Therefore, $D$ contains at least one cycle of length $l$ for each $l\in[r+3$, $\sum_{l=2}^{k} n_l+r+1]$.

To conclude the proof, it is sufficient to prove that $D$ contains a cycle of length $l$ for each $l\in [3$, $r+2]$ (recall that $r\in [0$, $n_1-1]$ and $n_1\leq n_i$ for each $i\in[2$, $k]$).

Now, consider the subdigraph of $D$ induced by $V(D_1)$ and $V(D_2)$, namely $D'=D\langle V(D_1) \cup V(D_2)\rangle$. As  $(D_1$, $D_2)\neq \emptyset$ and $(D_2$, $D_1)\neq \emptyset$ in $D$, $D'$ is strong and thus, by Theorem \ref{theorem classification}, $D'$ contains a cycle of length $l$ for each $l\in[3$, $n_1+1]$. Since $0\leq r \leq n_1-1$ and $D'$ is a subdigraph of $D$, we have that $D$ contains a cycle of length $l$ for each $l\in[3$, $r+2]$, as wanted.
\end{enumerate}
From both cases we have the result.
\end{enumerate}

\end{proof}

\section{Open problem}

In the previous section we proved that a strong digraph in the g.s. of $k$ vertex disjoint Hamiltonian diagraphs, $D_1$, $D_2$, \ldots, $D_k$, is vertex-pancyclic, pancyclic or Hamiltonian and contains cycles of several lengths. However, two questions remain to be answered: 

\begin{enumerate}
\item Is there a strongly connected digraph $D\in \oplus_{i=1}^k D_i$ which is pancyclic but not vertex-pancyclic?

\item Is there a strongly connected digraph $D\in \oplus_{i=1}^k D_i$ which is Hamiltonian and contains cycles of each length in  \begin{math}[3, \max\{\left(\sum_{i\in S} |V(D_i)|\right) + 1 \colon S\subset[1,~k] \text{ with } |S|=k-1\}]\end{math} but is not pancyclic?
\end{enumerate}


\nocite{*}
\bibliographystyle{abbrvnat}
\bibliography{Pancyclism-g.s.}
\label{sec:biblio}

\end{document}